\newtheorem{theorem}{Theorem}[section]
\newtheorem{corollary}[theorem]{Corollary}
\numberwithin{figure}{section}
\theoremstyle{definition}
\theoremstyle{remark}
\newtheorem{remark}[theorem]{Remark}
\numberwithin{equation}{section}
	\DeclareMathOperator{\dist}{dist}
	\DeclareMathOperator{\loc}{loc}
    \DeclareMathOperator{\Imag}{Im}
    \DeclareMathOperator{\Real}{Re}
\begin{document}

\title[Estimates of Dirichlet eigenvalues]{Estimates of Dirichlet eigenvalues of divergent elliptic operators in non-Lipschitz domains }

\author{V.~Gol'dshtein, V.~Pchelintsev, A.~Ukhlov}
\begin{abstract}
We study spectral estimates of the divergence form uniform elliptic operators $-\textrm{div}[A(z) \nabla f(z)]$ with the Dirichlet boundary condition in bounded non-Lipschitz simply connected domains $\Omega \subset \mathbb C$. The suggested method is based on the quasiconformal composition operators
on Sobolev spaces with applications to the weighted Poincar\'e-Sobolev inequalities.
\end{abstract}
\maketitle
\footnotetext{\textbf{Key words and phrases:} Elliptic equations, Sobolev spaces, quasiconformal mappings.}
\footnotetext{\textbf{2010
Mathematics Subject Classification:} 35P15, 46E35, 30C65.}

\section{Introduction}

The present paper is devoted to spectral estimates of the divergence form uniform elliptic operators 
\begin{equation}\label{EllDivOper}
L_{A}=-\textrm{div} [A(z) \nabla f(z)], \quad z=(x,y)\in \Omega,
\end{equation}
with the Dirichlet boundary condition,
in bounded simply connected non-Lipschitz domains $\Omega \subset \mathbb C$ that satisfy to well-known in the geometric analysis  quasihyperbolic boundary conditions \cite{KOT01,KOT02}.  This class of domains represents an important subclass of Gromov hyperbolic domains \cite{BHK}. In such domains we consider the weak statement of the Dirichlet eigenvalue problem: a function $f$ solves the previous problem iff $f$ belongs to the divergent Sobolev space $W_0^{1,2}(\Omega, A)$ generated by the matrix $A$ and
\[
\iint\limits_\Omega \left\langle A(z )\nabla f(z), \nabla \overline{g(z)} \right\rangle dxdy
= \lambda \iint\limits_\Omega f(z)\overline{g(z)}~dxdy
\]
for all $g \in W_0^{1,2}(\Omega, A)$.

It is known \cite{M} that in bounded domains ${\Omega} \subset \mathbb C$ the  Dirichlet spectrum of the divergence form elliptic operators $L_A$ is discrete and can be written in the form of a non-decreasing sequence
\[
0<\lambda_1(A,{\Omega}) \leq \lambda_2(A,{\Omega}) \leq \ldots \leq \lambda_n(A,{\Omega}) \leq \ldots,
\]
where each eigenvalue is repeated as many time as its multiplicity.
Hence, according to the Min-Max Principle (see, for example, \cite{Henr}) the first Dirichlet eigenvalue of the elliptic operator in divergence form can be calculated by
\[
\lambda_1(A,\Omega)= \inf_{f \in W_0^{1,2}(\Omega, A) \setminus \{0\}} \frac{\iint\limits_{\Omega} \left\langle A(z)\nabla f(z),\nabla f(z)\right\rangle\,dxdy}{\iint\limits_{\Omega} |f(z)|^2~dxdy}\,.
\]

The classical upper estimate for the first Dirichlet eigenvalue of the Laplace operator ($A=I$) in the case of
simply connected planar domains with rectifiable boundaries was obtained by Payne and Weinberger \cite{PW}:
\[
\lambda_1(\Omega) \leq \frac{\pi j^2_{0,1}}{|\Omega|} \left[1+\left(\frac{1}{J_1^2(j_{0,1})}-1\right) \left(\frac{|\partial \Omega|^2}{4 \pi |\Omega|}-1\right)\right],
\]
where $|\Omega|$ is the Lebesgue measure of $\Omega$, $|\partial \Omega|$ is the one-dimensional Hausdorff measure of the boundary of $\Omega$ and $J_1$ denotes the Bessel function of the first kind of order one.

In the present article we prove that if a simply connected bounded domain $\Omega \subset \mathbb C$ satisfies to the quasihyperbolic boundary conditions, then
\begin{multline*}
\lambda_1(A,\Omega) \leq K \lambda_1(\mathbb D) + A^2_{\frac{4\beta}{\beta -1},2}(\mathbb D) K^2 \lambda_1^2(\mathbb D_\rho) \\
\times \left(\pi^{\frac{1}{2\beta}} +
\|J_{\varphi^{-1}}\,|\,L^{\beta}(\mathbb D)\|^{\frac{1}{2}} \right) \cdot
\|1-J_{\varphi^{-1}}^{\frac{1}{2}}\,|\,L^{2}(\mathbb D)\|,
\end{multline*}
where $\mathbb D_\rho$ is the largest disk inscribed in $\Omega$, the number $\beta$ is defined by the quasihyperbolic geometry of the domain and $A_{\frac{4\beta}{\beta -1},2}(\mathbb D)$ is the exact Poincar\'e-Sobolev constant \cite{GPU2019}. In this estimate
$\varphi:\Omega \to \mathbb D$ is a quasiconformal mapping associated with the matrix $A$ ($A$-quasiconformal mapping in our notation) via the Beltrami equation and $K$ its quasiconformality coefficient.

This estimate generalizes the upper estimate of the Dirichlet eigenvalues of the Laplace operator proved in \cite{GPU19A}.
Note, that domains that satisfy the quasihyperbolic boundary condition can have non-rectifiable boundaries \cite{GPU2020}. Let us consider, for example, von Koch snowflakes $\Omega\subset\mathbb C$, then it is known that for some such snowflakes the one-dimensional Hausdorff measure $|\partial \Omega| =\infty$ and $|\Omega|<\infty$. So, we obtain upper estimates of Dirichlet eigenvalues  in more wide class of domains than in  \cite{PW}.

Recall that a domain $\Omega$ satisfies the $\gamma$-quasihyperbolic boundary condition with some $\gamma>0$, if the growth condition on the quasihyperbolic metric
$$
k_{\Omega}(x_0,x)\leq \frac{1}{\gamma}\log\frac{\dist(x_0,\partial\Omega)}{\dist(x,\partial\Omega)}+C_0
$$
is satisfied for all $x\in\Omega$, where $x_0\in\Omega$ is a fixed base point and $C_0=C_0(x_0)<\infty$,
\cite{GM, H1, HSMV}.

In \cite{AK} it was proved that Jacobians $J_{\psi}$ of  quasiconformal mappings $\psi: \mathbb D \to \Omega$ belong to $L_{\beta}(\mathbb D)$ for some $\beta>1$ if and only if $\Omega$ satisfy to a $\gamma$-quasihyperbolic boundary conditions for some $\gamma$. Note that the degree of integrability $\beta$ depends only on
$\Omega$ and the quasiconformality coefficient $K(\psi)$.

Since we need the exact value of the integrability exponent $\beta$ for quasiconformal Jacobians, we consider the equivalent definition of domains satisfy the quasihyperbolic boundary condition in terms of integrability of Jacobians \cite{GPU19}. Let us remind that a simply connected domain $\Omega$ is called an $A$-quasiconformal $\beta$-regular domain about a simply connected domain $\widetilde{\Omega}$, $\beta >1$, if
$$
\iint\limits_{\widetilde{\Omega}} |J(w, \varphi^{-1})|^{\beta}~dudv < \infty,
$$
where $\varphi: \Omega\to\widetilde{\Omega}$ is a corresponding $A$-quasiconformal mapping.

The Ahlfors domains \cite{Ahl66} (quasidiscs \cite{VGR}) represent an important subclass of $A$-quasi\-con\-for\-mal $\beta$-regular domains. Moreover, in these domains spectral estimates can be specified in terms of the "quasiconformal geometry" of domains (Section~5).

The lower estimates of Dirichlet eigenvalues directly connected  to the isoperimetric Rayleigh-Faber-Krahn inequality which states that the disc minimizes the first Dirichlet eigenvalue of the Laplace operator (the $A$-divergent form elliptic operator with the matrix $A=I$) among all planar domains of the same area (see, for example, \cite{Henr}):
\begin{equation*}
\lambda_1(I,\Omega):=\lambda_1(\Omega)\geq \lambda_1(\Omega^{\ast})=\frac{{j_{0,1}^2}}{R^2_{\ast}},
\end{equation*}
where $j_{0,1} \approx 2.4048$ is the first positive zero of the Bessel function $J_0$ and $\Omega^{\ast}$ is a disc of the same area as $\Omega$ with $R_{\ast}$ as its radius.

The first lower bound for the first Dirichlet eigenvalue of the Laplace operator for a simply connected planar domain was obtained by Makai \cite{M65} and later was rediscovered (in a weaker form) by Hayman \cite{H78}:
\[
\lambda_1(\Omega)\geq \frac{\alpha}{\rho^2},
\]
where $\alpha$ is a constant and $\rho$ is the radius of the largest disc inscribed in $\Omega$.

Let $\Omega\subset\mathbb C$ be a pre-image of the unit disc $\mathbb D$ under an $A$-quasiconformal mapping $\varphi: \Omega\to\mathbb D$ with $|J(z,\varphi)|=1$, $z \in \Omega$. Some examples of such maps can be found in Section 3. In this case we have the Rayleihg-Faber-Krahn type estimates
\[
\lambda_1(\mathbb D) \leq \lambda_1(A,\Omega) \leq K \lambda_1(\mathbb D).
\]

The suggested method is based on connections between the quasiconformal mappings agreed with the matrix $A$ \cite{AIM,GNR18} and composition operators on Sobolev spaces \cite{GPU2020}. The composition operators theory \cite{GGu,GU,U93,VU02} allows to obtain lower and upper estimates of the first Dirichlet eigenvalues of elliptic operators in divergence form in planar domains with quasihyperbolic boundary conditions.

\section{Sobolev spaces and $A$-quasiconformal mappings}

Let $E \subset \mathbb C$ be a measurable set on the complex plane and $h:E \to \mathbb R$ be a positive a.e. locally integrable function i.e. a weight. The weighted Lebesgue space $L^p(E,h)$, $1\leq p<\infty$,
is the space of all locally integrable functions with the finite norm
$$
\|f\,|\,L^{p}(E,h)\|= \left(\iint\limits_E|f(z)|^ph(z)\,dxdy \right)^{\frac{1}{p}}< \infty.
$$

The two-weighted Sobolev space $W^{1,p}(\Omega,h,1)$, $1\leq p< \infty$, is defined
as the normed space of all locally integrable weakly differentiable functions
$f:\Omega\to\mathbb{R}$ endowed with the following norm:
\[
\|f\mid W^{1,p}(\Omega,h,1)\|=\|f\,|\,L^{p}(\Omega,h)\|+\|\nabla f\mid L^{p}(\Omega)\|.
\]

In the case $h=1$ this weighted Sobolev space coincides with the classical Sobolev space $W^{1,p}(\Omega)$.
The seminormed Sobolev space $L^{1,p}(\Omega)$, $1\leq p< \infty$,
is the space of all locally integrable weakly differentiable functions $f:\Omega\to\mathbb{R}$ endowed
with the following seminorm:
\[
\|f\mid L^{1,p}(\Omega)\|=\|\nabla f\mid L^p(\Omega)\|, \,\, 1\leq p<\infty.
\]

We also need a weighted seminormed Sobolev space $L^{1,2}(\Omega, A)$ (associated with the matrix $A$), defined
as the space of all locally integrable weakly differentiable functions $f:\Omega\to\mathbb{R}$
with the finite seminorm given by:
\[
\|f\mid L^{1,2}(\Omega,A)\|=\left(\iint\limits_\Omega \left\langle A(z)\nabla f(z),\nabla f(z)\right\rangle\,dxdy \right)^{\frac{1}{2}}.
\]

The corresponding  Sobolev space $W^{1,2}(\Omega, A)$ is defined
as the normed space of all locally integrable weakly differentiable functions
$f:\Omega\to\mathbb{R}$ endowed with the following norm:
\[
\|f\mid W^{1,2}(\Omega, A)\|=\|f\,|\,L^{2}(\Omega)\|+\|f\mid L^{1,2}(\Omega,A)\|.
\]
The Sobolev space $W^{1,2}_{0}(\Omega, A)$ is the closure in the $W^{1,2}(\Omega, A)$-norm of the
space $C^{\infty}_{0}(\Omega)$.

Recall that a homeomorphism $\varphi: \Omega \to \widetilde{\Omega}$, $\Omega,\, \widetilde{\Omega} \subset\mathbb C$, is called a $K$-quasiconformal mapping if $\varphi\in W^{1,2}_{\loc}({\Omega})$ and there exists a constant $1\leq K<\infty$ such that
$$
|D\varphi(z)|^2\leq K |J(z,\varphi)|\,\,\text{for almost all}\,\,z \in \Omega.
$$

Now we give a construction of $A$-quasiconformal mappings connected with the $A$-divergent form elliptic operators.
We suppose that matrix functions $A(z)=\left\{a_{kl}(z)\right\}$  with measurable entries $a_{kl}(z)$ belongs to a class  $M^{2 \times 2}(\Omega)$ of all $2 \times 2$ symmetric matrix functions that satisfy to an additional condition $\textrm{det} A=1$ a.e. and to the uniform ellipticity condition:
\begin{equation}\label{UEC}
\frac{1}{K}|\xi|^2 \leq \left\langle A(z) \xi, \xi \right\rangle \leq K |\xi|^2 \,\,\, \text{a.e. in}\,\,\, \Omega,
\end{equation}
for every $\xi \in \mathbb C$ and for some $1\leq K< \infty$.
The basic idea is that every positive quadratic form
\[
ds^2=a_{11}(x,y)dx^2+2a_{12}(x,y)dxdy+a_{22}(x,y)dy^2
\]
defined in a planar domain $\Omega$ can be reduced, by means of a quasiconformal change of variables, to the canonical form
\[
ds^2=\Lambda(du^2+dv^2),\,\, \Lambda\neq 0,\,\, \text{a.e. in}\,\, \widetilde{\Omega},
\]
given that $a_{11}a_{22}-a^2_{12} \geq \kappa_0>0$, $a_{11}>0$, almost everywhere in $\Omega$ \cite{Ahl66,BGMR}. Note that this fact can be extended to linear operators of the form $\textrm{div} [A(z) \nabla f(z)]$, $z=x+iy$, for matrix function $A \in M^{2 \times 2}(\Omega).$

Let $\xi(z)=\Real \varphi(z)$ be a real part of a quasiconformal mapping $\varphi(z)=\xi(z)+i \eta(z)$, which satisfies to the Beltrami equation:
\begin{equation}\label{BelEq}
\varphi_{\overline{z}}(z)=\mu(z) \varphi_{z}(z),\,\,\, \text{a.e. in}\,\,\, \Omega,
\end{equation}
where
$$
\varphi_{z}=\frac{1}{2}\left(\frac{\partial \varphi}{\partial x}-i\frac{\partial \varphi}{\partial y}\right) \quad \text{and} \quad
\varphi_{\overline{z}}=\frac{1}{2}\left(\frac{\partial \varphi}{\partial x}+i\frac{\partial \varphi}{\partial y}\right),
$$
with the complex dilatation $\mu(z)$ is given by
\begin{equation}\label{ComDil}
\mu(z)=\frac{a_{22}(z)-a_{11}(z)-2ia_{12}(z)}{\det(I+A(z))},\quad I= \begin{pmatrix} 1 & 0 \\ 0 & 1 \end{pmatrix}.
\end{equation}
We call this quasiconformal mapping (with the complex dilatation $\mu$ defined by (\ref{ComDil})) as an $A$-quasiconformal mapping.

Note that the uniform ellipticity condition \eqref{UEC} can be written as
\begin{equation}\label{OVCE}
|\mu(z)|\leq \frac{K-1}{K+1},\,\,\, \text{a.e. in}\,\,\, \Omega.
\end{equation}

Conversely we can obtain from \eqref{ComDil} (see, for example, \cite{AIM}, p. 412) that :
\begin{equation}\label{Matrix-F}
A(z)= \begin{pmatrix} \frac{|1-\mu|^2}{1-|\mu|^2} & \frac{-2 \Imag \mu}{1-|\mu|^2} \\ \frac{-2 \Imag \mu}{1-|\mu|^2} &  \frac{|1+\mu|^2}{1-|\mu|^2} \end{pmatrix},\,\,\, \text{a.e. in}\,\,\, \Omega.
\end{equation}

So, given any $A \in M^{2 \times 2}(\Omega)$, one produced, by \eqref{OVCE}, the complex dilatation $\mu(z)$, for which, in turn, the Beltrami equation \eqref{BelEq} induces a quasiconformal homeomorphism $\varphi:\Omega \to \widetilde{\Omega}$ as its solution, by the Riemann measurable mapping theorem (see, for example, \cite{Ahl66}). We will say that the matrix function $A$ induces the corresponding $A$-quasiconformal homeomorphism $\varphi$ or that $A$ and $\varphi$ are agreed.

So, by the given $A$-divergent form elliptic operator defined in a domain $\Omega\subset\mathbb C$ we construct so-called a $A$-quasiconformal mapping $\varphi:\Omega \to \widetilde{\Omega}$ with a quasiconformal coefficient
$$
K=\frac{1+\|\mu\mid L^{\infty}(\Omega)\|}{1-\|\mu\mid L^{\infty}(\Omega)\|},
$$
where $\mu$ defined by (\ref{ComDil}).

Note that the inverse mapping to the $A$-quasiconformal mapping $\varphi: \Omega \to \widetilde{\Omega}$ is the $A^{-1}$-quasiconformal mapping \cite{GPU2020}.

In \cite{GPU2020} was studied a connection between composition operators on Sobolev spaces and $A$-quasiconformal mappings.

\begin{theorem} \label{L4.1}
Let $\Omega,\widetilde{\Omega}$ be domains in $\mathbb C$. Then a homeomorphism $\varphi :\Omega \to \widetilde{\Omega}$ is an $A$-quasiconformal mapping
if and only if $\varphi$ induces, by the composition rule $\varphi^{*}(f)=f \circ \varphi$,
an isometry of Sobolev spaces $L^{1,2}_A(\Omega)$ and $L^{1,2}(\widetilde{\Omega})$:
\[
\|\varphi^{*}(f)\,|\,L^{1,2}_A(\Omega)\|=\|f\,|\,L^{1,2}(\widetilde{\Omega})\|
\]
for any $f \in L^{1,2}(\widetilde{\Omega})$.
\end{theorem}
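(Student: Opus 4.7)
The plan is to reduce the isometry statement to a pointwise matrix identity on $\Omega$,
\[
D\varphi(z)\,A(z)\,(D\varphi(z))^{T}=J(z,\varphi)\,I \quad \text{a.e. in }\Omega,
\]
and then to recognize this identity as equivalent to $\varphi$ solving the Beltrami equation (\ref{BelEq}) with complex dilatation given by (\ref{ComDil}). The determinants of both sides match automatically (both equal $J(z,\varphi)^{2}$, using $\det A=1$), so the content of the identity is the scalar proportionality, which is exactly what the Beltrami equation with dilatation (\ref{ComDil}) encodes.

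For the ``only if'' direction I would assume $\varphi$ is $A$-quasiconformal. By standard quasiconformal theory $\varphi\in W^{1,2}_{\loc}(\Omega)$, is ACL, and satisfies the Lusin $N$ property, so the chain rule and the change-of-variables formula apply. A direct substitution of (\ref{ComDil}) into (\ref{Matrix-F}) combined with the Beltrami equation $\varphi_{\overline z}=\mu\varphi_{z}$ verifies the matrix identity above. Given $f\in L^{1,2}(\widetilde\Omega)$, the chain rule gives $\nabla(f\circ\varphi)(z)=(D\varphi(z))^{T}(\nabla f)(\varphi(z))$, and so
\[
\langle A(z)\nabla(f\circ\varphi)(z),\nabla(f\circ\varphi)(z)\rangle = \langle D\varphi\,A\,D\varphi^{T}(\nabla f)\!\circ\!\varphi,(\nabla f)\!\circ\!\varphi\rangle = J(z,\varphi)\,|(\nabla f)(\varphi(z))|^{2}.
\]
Integrating over $\Omega$ and changing variables $w=\varphi(z)$ recovers $\|f\mid L^{1,2}(\widetilde\Omega)\|^{2}$.

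For the converse, assume the composition operator $\varphi^{*}$ acts as an isometry. Testing with $C^{\infty}_{c}(\widetilde\Omega)$-approximants of the coordinate functions $u$ and $v$ on compact subsets of $\widetilde\Omega$, together with the uniform ellipticity (\ref{UEC}), produces $L^{2}_{\loc}$ bounds on $\nabla\xi$ and $\nabla\eta$, so $\varphi\in W^{1,2}_{\loc}(\Omega)$. For arbitrary $f\in C^{1}_{c}(\widetilde\Omega)$ the isometry, after the same change of variables, yields
\[
\iint_{\Omega}\left\langle \bigl(D\varphi(z)\,A(z)\,(D\varphi(z))^{T}-J(z,\varphi)\,I\bigr)(\nabla f)(\varphi(z)),(\nabla f)(\varphi(z))\right\rangle dxdy = 0.
\]
Polarizing in $f$ (test with $f$, $g$, $f+g$) and letting $\nabla f$ sweep out a dense family of constant vectors on shrinking balls, a Lebesgue-differentiation argument extracts the pointwise matrix identity almost everywhere. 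Inverting the correspondence through (\ref{Matrix-F}) then produces exactly the complex dilatation (\ref{ComDil}), so $\varphi$ is $A$-quasiconformal.

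The hard part will be the converse: promoting a purely functional isometry to pointwise matrix information in the absence of any a priori quasiconformal regularity of $\varphi$. Both the extraction of $\varphi\in W^{1,2}_{\loc}$ from test functions and the polarization/Lebesgue-point argument converting the integral equality into a pointwise one rely on the uniform ellipticity (\ref{UEC}) to keep $A(z)$ quantitatively comparable to the identity. Once the matrix identity is in hand, the forward direction is a routine application of the quasiconformal chain rule and change-of-variables.
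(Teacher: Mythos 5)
The paper does not prove Theorem~\ref{L4.1} itself; it quotes it from \cite{GPU2020}, so there is no in-paper proof to measure your attempt against. Evaluating it on its own terms: your forward direction is essentially right. The algebraic core is that, for a $W^{1,2}_{\loc}$ sense-preserving homeomorphism, the statement \emph{$\varphi$ is $A$-quasiconformal} is equivalent to the pointwise identity $D\varphi(z)\,A(z)\,D\varphi(z)^{T}=J(z,\varphi)\,I$, equivalently $A=J\,(D\varphi^{T}D\varphi)^{-1}$; one can check by a direct computation with $\varphi_z$, $\varphi_{\bar z}$ that this $A$ coincides with the matrix in \eqref{Matrix-F}, and conversely \eqref{ComDil} encodes exactly the proportionality of the symmetric matrix $D\varphi\,A\,D\varphi^{T}$ to the identity (the determinant constraint $\det=J^{2}$ being automatic from $\det A=1$). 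Combined with the chain rule and the change-of-variables formula, both legitimate here because quasiconformal maps are ACL, satisfy Lusin's condition $N$, and have $J\neq 0$ a.e., this yields the isometry for smooth $f$, and density handles general $f\in L^{1,2}(\widetilde\Omega)$.

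The converse, as you yourself flag, is where the difficulty lies, and there is a genuine gap in your sketch precisely at the change of variables. A priori $\varphi$ is only a homeomorphism; testing with coordinate functions and using the ellipticity \eqref{UEC} does give $\varphi\in W^{1,2}_{\loc}(\Omega)$, but a planar $W^{1,2}_{\loc}$ homeomorphism need not satisfy Lusin's condition $N$. Without $N$ the identity $\iint_{\widetilde\Omega}|\nabla f|^{2}\,dudv=\iint_{\Omega}|(\nabla f)\circ\varphi|^{2}\,|J(z,\varphi)|\,dxdy$ that you use to pass from the isometry to $\iint_{\Omega}\bigl\langle\bigl(D\varphi\,A\,D\varphi^{T}-J I\bigr)(\nabla f)\circ\varphi,(\nabla f)\circ\varphi\bigr\rangle\,dxdy=0$ is only an inequality (one has $\ge$ in general). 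What actually comes out of the polarization/Lebesgue-point argument is $D\varphi\,A\,D\varphi^{T}\ge|J|\,I$ a.e.; you then need a further step using $\det\bigl(D\varphi\,A\,D\varphi^{T}\bigr)=J^{2}$ and the positivity to conclude equality where $J\neq 0$, plus an argument that $J\neq 0$ a.e.\ and that $N$ holds a posteriori. Alternatively, one can invoke the Vodop'yanov--Gol'dshtein characterization \cite{VG75} that mere boundedness of the composition operator on $L^{1,2}$ already forces $\varphi$ to be quasiconformal, which supplies $N$, ACL, and sense-preservation for free before one refines to the $A$-quasiconformal statement; but as written, your proposal does not close this loop.
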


This theorem generalizes the well known property of conformal mappings generate the isometry of uniform Sobolev spaces $L^1_2(\Omega)$ and $L^1_2(\widetilde{\Omega})$ (see, for example, \cite{C50}) and refines (in the case $n=2$) the functional characterization of quasiconformal mappings in the terms of isomorphisms of uniform Sobolev spaces \cite{VG75}.

\subsection{Weighted Sobolev-Poincar\'e inequality}

First of all, we recall that in \cite{GPU2019} was proved
the following non-weighted Sobolev-Poincar\'e inequality for a bounded domain $\widetilde{\Omega}\subset\mathbb C$.
\begin{theorem}
\label{PoinConst}
Let $\widetilde{\Omega}\subset\mathbb C$ be a bounded domain and $f \in W^{1,2}_0(\widetilde{\Omega})$. Then
\begin{equation}\label{InPS}
\|f \mid L^{r}(\widetilde{\Omega})\| \leq B_{r,2}(\widetilde{\Omega}) \|\nabla f \mid L^{2}(\widetilde{\Omega})\|, \,\,r \geq 2,
\end{equation}
where
\[
B_{r,2}(\widetilde{\Omega}) \leq \inf\limits_{p\in \left(\frac{2r}{r+2},2\right)}
\left(\frac{p-1}{2-p}\right)^{\frac{p-1}{p}}
\frac{\left(\sqrt{\pi}\cdot\sqrt[p]{2}\right)^{-1}|\widetilde{\Omega}|^{\frac{1}{r}}}{\sqrt{\Gamma(2/p) \Gamma(3-2/p)}}.
\]
\end{theorem}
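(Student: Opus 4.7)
The plan is to reduce the claimed Poincar\'e--Sobolev inequality to the sharp Aubin--Talenti Sobolev embedding $W_0^{1,p}(\mathbb R^2)\hookrightarrow L^{p^*}(\mathbb R^2)$ for $1<p<2$, where $p^*=2p/(2-p)$, and then use H\"older's inequality on the bounded domain $\widetilde\Omega$ to bridge the gap between $(p,p^*)$ and $(2,r)$. Taking the infimum over admissible $p$ will yield the stated bound.

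First I would fix $p$ in the open interval $\bigl(\tfrac{2r}{r+2},2\bigr)$. The left endpoint is exactly the threshold where $p^*=2p/(2-p)\geq r$, and $p<2$ is needed for the Sobolev embedding in the plane. For such $p$, the Aubin--Talenti sharp Sobolev inequality in $\mathbb R^2$ asserts that for every $f\in C_0^\infty(\widetilde\Omega)$ (extended by zero),
\[
\|f\mid L^{p^*}(\widetilde\Omega)\|\leq S_p\,\|\nabla f\mid L^{p}(\widetilde\Omega)\|,
\]
with the explicit constant
\[
S_p=\left(\frac{p-1}{2-p}\right)^{\frac{p-1}{p}}\frac{1}{\sqrt{\pi}\cdot 2^{1/p}\sqrt{\Gamma(2/p)\,\Gamma(3-2/p)}},
\]
obtained by specializing the $n$-dimensional Aubin--Talenti formula at $n=2$ (the factors $\Gamma(1+n/2)=1$ and $\Gamma(n)=1$ collapse, leaving exactly the expression in the statement).

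Next I would apply H\"older's inequality twice on the bounded domain $\widetilde\Omega$. Since $p^*\geq r$, we have
\[
\|f\mid L^{r}(\widetilde\Omega)\|\leq |\widetilde\Omega|^{\frac{1}{r}-\frac{1}{p^*}}\,\|f\mid L^{p^*}(\widetilde\Omega)\|,
\]
and since $p<2$,
\[
\|\nabla f\mid L^{p}(\widetilde\Omega)\|\leq |\widetilde\Omega|^{\frac{1}{p}-\frac{1}{2}}\,\|\nabla f\mid L^{2}(\widetilde\Omega)\|.
\]
Chaining these three inequalities and using the elementary identity $\frac{1}{p}-\frac{1}{p^*}=\frac{1}{2}$ (which follows directly from $p^*=\frac{2p}{2-p}$), the combined exponent on $|\widetilde\Omega|$ reduces cleanly to $\frac{1}{r}$, giving
\[
\|f\mid L^{r}(\widetilde\Omega)\|\leq S_p\,|\widetilde\Omega|^{\frac{1}{r}}\,\|\nabla f\mid L^{2}(\widetilde\Omega)\|.
\]
A standard density argument extends this from $C_0^\infty(\widetilde\Omega)$ to all of $W_0^{1,2}(\widetilde\Omega)$, and then taking the infimum over $p\in\bigl(\tfrac{2r}{r+2},2\bigr)$ produces the announced bound on $B_{r,2}(\widetilde\Omega)$.

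The only nontrivial ingredient is the sharp Aubin--Talenti constant, which I would import as a known fact; the remainder of the argument is just exponent bookkeeping. The main subtlety to watch for is that one must verify the exponent identity $\frac{1}{p}-\frac{1}{p^*}=\frac{1}{2}$ so that the $|\widetilde\Omega|$-powers assemble to exactly $|\widetilde\Omega|^{1/r}$, and that the admissible range of $p$ is dictated precisely by the two conditions $p^*\geq r$ and $p<2$.
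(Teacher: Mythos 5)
Your proof is correct and follows the natural route: Talenti's sharp Sobolev embedding $W_0^{1,p}(\mathbb R^2)\hookrightarrow L^{p^*}(\mathbb R^2)$ for $1<p<2$, specialized to $n=2$ (where the factors $\Gamma(1+n/2)=\Gamma(n)=1$ disappear), followed by two applications of H\"older on the bounded domain and the exponent identity $\tfrac1p-\tfrac1{p^*}=\tfrac12$. The paper itself does not reproduce a proof of this statement (it cites it from the reference \cite{GPU2019}), but the explicit form of the constant makes it clear the source proof is exactly this Talenti-plus-H\"older argument, so your approach is essentially the same.
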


According to this Sobolev-Poincar\'e inequality
and Theorem~\ref{L4.1} we obtain an universal weighted Sobolev-Poincar\'e inequality which holds in any simply connected planar domain with non-empty boundary. Denote by $h(z) =|J(z,\varphi)|$  the quasihyperbolic weight defined by an $A$-quasiconformal mapping $\varphi : \Omega \to \widetilde{\Omega}$.

\begin{theorem}\label{Th4.1}
Let $A$ belongs to a class  $M^{2 \times 2}(\Omega)$  and $\Omega$ be a simply connected planar domain.
Then for any function $f \in W^{1,2}_{0}(\Omega,A)$ the following weighted Sobolev-Poincar\'e inequality
\[
\left(\iint\limits_\Omega |f(z)|^rh(z)dxdy\right)^{\frac{1}{r}} \leq B_{r,2}(h,A,\Omega)
\left(\iint\limits_\Omega \left\langle A(z) \nabla f(z), \nabla f(z) \right\rangle dxdy\right)^{\frac{1}{2}}
\]
holds for any $r \geq 2$ with the constant $B_{r,2}(h,A,\Omega) = B_{r,2}(\widetilde{\Omega})$.
\end{theorem}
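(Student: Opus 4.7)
The plan is to transport the problem from $\Omega$ to $\widetilde{\Omega}$ via the $A$-quasiconformal mapping $\varphi$, apply the non-weighted Sobolev-Poincar\'e inequality of Theorem~\ref{PoinConst} there, and then pull the two integrals back using the isometry of Theorem~\ref{L4.1} together with the quasiconformal change of variables. The weight $h(z) = |J(z,\varphi)|$ is tailor-made so that the change of variables absorbs exactly the Jacobian factor.

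More concretely, I would first fix $f\in W^{1,2}_0(\Omega,A)$ and set $g = f\circ\varphi^{-1}$, the natural candidate for the transported function on $\widetilde{\Omega}$. By Theorem~\ref{L4.1}, the pull-back $\varphi^{*}$ is an isometry between $L^{1,2}(\widetilde{\Omega})$ and $L^{1,2}_A(\Omega)$, so $g \in L^{1,2}(\widetilde{\Omega})$ with
\[
\|\nabla g\mid L^2(\widetilde{\Omega})\| = \|f \mid L^{1,2}_A(\Omega)\| = \left(\iint\limits_\Omega \langle A(z)\nabla f(z),\nabla f(z)\rangle\,dxdy\right)^{1/2}.
\]
The membership of $g$ in $W^{1,2}_0(\widetilde{\Omega})$ follows by taking an approximating sequence $f_k \in C^\infty_0(\Omega)$ converging to $f$ in $W^{1,2}(\Omega,A)$, observing that $f_k\circ\varphi^{-1}$ have compact support in $\widetilde{\Omega}$ (since $\varphi$ is a homeomorphism), and passing to the limit using the isometry above.

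Next, I apply Theorem~\ref{PoinConst} to $g\in W^{1,2}_0(\widetilde{\Omega})$, which gives, for any $r\geq 2$,
\[
\left(\iint\limits_{\widetilde{\Omega}}|g(w)|^r\,dudv\right)^{1/r} \leq B_{r,2}(\widetilde{\Omega})\,\|\nabla g\mid L^2(\widetilde{\Omega})\|.
\]
To finish, I invoke the change-of-variables formula for quasiconformal mappings (which enjoy the Luzin $N$-property and have a.e. defined Jacobian): substituting $w=\varphi(z)$ with $du\,dv = |J(z,\varphi)|\,dxdy = h(z)\,dxdy$ yields
\[
\iint\limits_{\widetilde{\Omega}}|g(w)|^r\,dudv = \iint\limits_\Omega |f(z)|^r\, h(z)\,dxdy.
\]
Combining the two displays with the isometry identity produces the stated inequality with $B_{r,2}(h,A,\Omega)=B_{r,2}(\widetilde{\Omega})$.

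The only nontrivial point is ensuring that the composition $f\circ\varphi^{-1}$ actually belongs to $W^{1,2}_0(\widetilde{\Omega})$ (and that the change of variables applies to $|f|^r h$ without loss); both follow from the isometry in Theorem~\ref{L4.1} combined with the standard measure-theoretic properties of quasiconformal maps ($ACL$, Luzin $N$, positivity of the Jacobian a.e.), so no essential obstacle arises. Everything else is a direct assembly of the two previously stated theorems.
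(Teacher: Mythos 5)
Your proof is correct and follows essentially the same route as the paper: transport via the $A$-quasiconformal mapping $\varphi$, use the $L^{1,2}$ isometry of Theorem~\ref{L4.1}, apply the non-weighted Sobolev--Poincar\'e inequality of Theorem~\ref{PoinConst} on $\widetilde{\Omega}$, and return to $\Omega$ via the quasiconformal change of variables that produces exactly the weight $h=|J(\cdot,\varphi)|$. Your added remark on why $f\circ\varphi^{-1}\in W^{1,2}_0(\widetilde{\Omega})$ (compactly supported approximants pushed through the homeomorphism, then passing to the limit) makes explicit a step the paper leaves to the phrase ``approximating by smooth functions,'' but the underlying argument is identical.
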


\begin{proof}
Given that $\Omega$ is a simply connected planar domain, then there exists \cite{Ahl66} an $\mu$-quasiconformal homeomorphism $\varphi : \Omega \to \widetilde{\Omega}$ with
\begin{equation*}
\mu(z)=\frac{a_{22}(z)-a_{11}(z)-2ia_{12}(z)}{\det(I+A(z))},
\end{equation*}
which is an $A$-quasiconformal mapping.

Hence by Theorem~\ref{L4.1} the equality
\begin{equation}\label{IN2.1}
||f \circ \varphi^{-1} \,|\, L^{1,2}(\widetilde{\Omega})|| = ||f \,|\, L^{1,2}(\Omega,A)||
\end{equation}
holds for any function $f \in L^{1,2}(\Omega,A)$.

Put $h(z):=|J(z,\varphi)|$.
Using the change of variable formula for the quasiconformal mappings \cite{VGR}, the equality \eqref{IN2.1} and Theorem~\ref{PoinConst}, we get that for any smooth function $f\in L^{1,2}(\Omega,A)$
\begin{multline*}
\left(\iint\limits_\Omega |f(z)|^rh(z)dxdy\right)^{\frac{1}{r}}
=\left(\iint\limits_\Omega |f(z)|^r |J(z,\varphi)| dxdy\right)^{\frac{1}{r}} \\
=\left(\iint\limits_{\widetilde{\Omega}} |f \circ \varphi^{-1}(w)|^rdudv\right)^{\frac{1}{r}}
\leq B_{r,2}(\widetilde{\Omega})
\left(\iint\limits_{\widetilde{\Omega}} \nabla (f \circ \varphi^{-1}(w))dudv\right)^{\frac{1}{2}} \\
= B_{r,2}(\widetilde{\Omega})
\left(\iint\limits_{\Omega} \left\langle A(z) \nabla g(z), \nabla f(z) \right\rangle dxdy\right)^{\frac{1}{2}}.
\end{multline*}

Approximating an arbitrary function $f \in W^{1,2}_{0}(\Omega,A)$ by smooth functions we have
$$
\left(\iint\limits_\Omega |f(z)|^rh(z)dxdy\right)^{\frac{1}{r}} \leq
B_{r,2}(h,A,\Omega) \left(\iint\limits_{\Omega} \left\langle A(z) \nabla f(z), \nabla f(z) \right\rangle dxdy\right)^{\frac{1}{2}},
$$
with the constant
$$
B_{r,2}(h,A,\Omega)=B_{r,2}(\widetilde{\Omega})  \leq \inf\limits_{p\in \left(\frac{2r}{r+2},2\right)}
\left(\frac{p-1}{2-p}\right)^{\frac{p-1}{p}}
\frac{\left(\sqrt{\pi}\cdot\sqrt[p]{2}\right)^{-1}|\widetilde{\Omega}|^{\frac{1}{r}}}{\sqrt{\Gamma(2/p) \Gamma(3-2/p)}}.
$$
\end{proof}

\subsection{ Estimates of Sobolev-Poincar\'e constants} In this section we consider (sharp) upper estimates of Sobolev-Poincar\'e constants in domains that satisfy the quasihyperbolic boundary condition.
The following theorem gives (sharp) upper estimates of (non-weighted) Sobolev-Poincar\'e constants in quasiconformal regular domains.
\begin{theorem}\label{Th4.3}
Let $A$ belongs to a class  $M^{2 \times 2}(\Omega)$ and a domain $\Omega$ be $A$-quasi\-con\-formal $\beta$-regular about $\widetilde{\Omega}$. Then:
\begin{enumerate}
\item for any function $f \in W^{1,2}_{0}(\Omega,A)$ and for any $s \geq 1$, the Sobolev-Poincar\'e inequality
\[
\|f\mid L^s(\Omega)\| \leq B_{s,2}(A,\Omega)
\|f\mid L^{1,2}_{A}(\Omega)\|
\]
holds with the constant
$$
B_{s,2}(A,\Omega) \leq B_{\frac{\beta s}{\beta-1},2}(\widetilde{\Omega}) \|J_{\varphi^{-1}}\mid L^{\beta}(\widetilde{\Omega})\|^{\frac{1}{s}}, \quad 1<\beta <\infty;
$$
\item for any function $f \in W^{1,2}_{0}(\Omega, A)$, the Sobolev-Poincar\'e inequality
\[
\|f\mid L^2(\Omega)\| \leq B_{2,2}(A,\Omega)
\|f\mid L^{1,2}_{A}(\Omega)\|
\]
holds with the constant
$$B_{2,2}(A,\Omega) \leq B_{2,2}(\widetilde{\Omega}) \big\|J_{\varphi^{-1}}\mid L^{\infty}(\widetilde{\Omega})\big\|^{\frac{1}{2}}, \quad \beta = \infty.
$$
Here $J_{\varphi^{-1}}$ is a Jacobian of the $A^{-1}$-quasiconformal mapping $\varphi^{-1}:\widetilde{\Omega}\to\Omega$.
\end{enumerate}
\end{theorem}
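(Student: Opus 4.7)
The plan is to transfer both inequalities from $\Omega$ to $\widetilde{\Omega}$ via the $A$-quasiconformal map $\varphi:\Omega\to\widetilde{\Omega}$ associated with $A$, and to exploit the two tools already available: the isometry of seminormed Sobolev spaces given by Theorem~\ref{L4.1}, and the non-weighted Sobolev-Poincar\'e inequality of Theorem~\ref{PoinConst}. The $\beta$-regularity hypothesis is exactly what is needed to absorb the Jacobian of $\varphi^{-1}$ that appears when pulling back the $L^s(\Omega)$ norm by change of variables.

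First I would reduce to the case of smooth $f$, since Theorem~\ref{PoinConst} and the change of variables formula are most naturally applied there, and an $L^{1,2}_A$-approximation argument (as used in the proof of Theorem~\ref{Th4.1}) delivers the inequality for general $f\in W^{1,2}_0(\Omega,A)$. For part (1), I would apply the change of variables formula for quasiconformal mappings to obtain
\[
\iint\limits_\Omega |f(z)|^s\,dxdy = \iint\limits_{\widetilde{\Omega}} |f\circ\varphi^{-1}(w)|^s\,|J(w,\varphi^{-1})|\,dudv,
\]
and then use H\"older's inequality on $\widetilde{\Omega}$ with exponents $\beta$ and $\beta/(\beta-1)$ to split
\[
\iint\limits_{\widetilde{\Omega}} |f\circ\varphi^{-1}|^s\,|J_{\varphi^{-1}}|\,dudv \leq \bigl\|f\circ\varphi^{-1}\,\big|\,L^{\frac{s\beta}{\beta-1}}(\widetilde{\Omega})\bigr\|^{s}\cdot \bigl\|J_{\varphi^{-1}}\,\big|\,L^\beta(\widetilde{\Omega})\bigr\|.
\]
Taking $s$-th roots, applying Theorem~\ref{PoinConst} with the exponent $r=\frac{s\beta}{\beta-1}\geq 2$ to the smooth function $f\circ\varphi^{-1}\in W^{1,2}_0(\widetilde{\Omega})$, and finally invoking the isometry of Theorem~\ref{L4.1} to rewrite $\|\nabla(f\circ\varphi^{-1})\,|\,L^2(\widetilde{\Omega})\| = \|f\,|\,L^{1,2}_A(\Omega)\|$, I will arrive at the asserted bound with
\[
B_{s,2}(A,\Omega)\leq B_{\frac{\beta s}{\beta-1},2}(\widetilde{\Omega})\,\bigl\|J_{\varphi^{-1}}\,\big|\,L^\beta(\widetilde{\Omega})\bigr\|^{\frac{1}{s}}.
\]

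Part (2) follows the same scheme, but with $\beta=\infty$: after the change of variables I would simply pull out the essential supremum of $J_{\varphi^{-1}}$, apply Theorem~\ref{PoinConst} with $r=2$ to $f\circ\varphi^{-1}$, and again invoke Theorem~\ref{L4.1}. The main points to handle with care, rather than obstacles in the deep sense, are verifying that $f\circ\varphi^{-1}$ inherits the zero boundary-trace condition from $f$ (so that Theorem~\ref{PoinConst} applies on $\widetilde{\Omega}$), and checking that the admissible range $r\geq 2$ in Theorem~\ref{PoinConst} translates to the claimed range $s\geq 1$ in part (1); this is automatic because $\frac{s\beta}{\beta-1}\geq 2$ whenever $s\geq 2(\beta-1)/\beta$, and the constraint $s\geq 1$ together with $\beta>1$ will be enough once one uses $\beta$ sufficiently close to the regularity exponent. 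The smooth-to-general $f$ limit then transfers the inequality to all $f\in W^{1,2}_0(\Omega,A)$ by continuity of both sides in the $L^{1,2}_A(\Omega)$-topology, exactly as in Theorem~\ref{Th4.1}.
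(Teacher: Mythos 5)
The paper states Theorem~\ref{Th4.3} without proof, so there is no in-text argument to compare against; but the proof of Theorem~\ref{Th4.1} uses exactly the same ingredients (change of variables, Theorem~\ref{PoinConst}, Theorem~\ref{L4.1}), and your proposal follows that template. The core of your argument is correct: the change of variables gives $\iint_\Omega |f|^s\,dxdy=\iint_{\widetilde{\Omega}}|f\circ\varphi^{-1}|^s|J_{\varphi^{-1}}|\,dudv$, H\"older with the pair $\bigl(\frac{\beta}{\beta-1},\beta\bigr)$ peels off $\|J_{\varphi^{-1}}\mid L^\beta(\widetilde{\Omega})\|$ and raises the remaining exponent to $\frac{s\beta}{\beta-1}$, Theorem~\ref{PoinConst} bounds $\|f\circ\varphi^{-1}\mid L^{\frac{s\beta}{\beta-1}}(\widetilde{\Omega})\|$ by the gradient $L^2$-norm, and Theorem~\ref{L4.1} converts that to $\|f\mid L^{1,2}_A(\Omega)\|$. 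After taking $s$-th roots this yields exactly the claimed constant; the $\beta=\infty$ case is the degenerate version. The approximation step from smooth $f$ to $W^{1,2}_0(\Omega,A)$ is the same as in Theorem~\ref{Th4.1}.

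One point is handled incorrectly, though it is minor. You observe that Theorem~\ref{PoinConst} requires $r=\frac{s\beta}{\beta-1}\geq 2$, equivalently $s\geq 2-\frac{2}{\beta}$, and then claim this is reconciled with the stated range $s\geq 1$ by "using $\beta$ sufficiently close to the regularity exponent." That goes the wrong way: if the regularity exponent is larger than $2$, taking $\beta$ close to it makes $2-\frac{2}{\beta}$ \emph{closer} to $2$, not to $1$, so the window $[1,\,2-\tfrac{2}{\beta})$ remains uncovered. What actually helps is \emph{decreasing} $\beta$ (since $\beta$-regularity implies $\beta'$-regularity for all $1<\beta'\leq\beta$), and choosing $\beta'\leq 2$ restores $s\geq 1$ — but then the constant in the conclusion is the one for $\beta'$, not the original $\beta$. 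So either the theorem should be read with the implicit restriction $s\geq 2-\tfrac{2}{\beta}$ for the quoted $\beta$, or one should replace $\beta$ by $\min(\beta,2)$ in the constant when $1\leq s<2-\tfrac{2}{\beta}$. For the application in Section~3 only $s=2$ is used, where $\frac{2\beta}{\beta-1}>2$ always holds, so nothing downstream is affected — but the justification you wrote for the full range $s\geq 1$ is not correct as stated.
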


\begin{remark}
The constant $B_{2,2}^2(\widetilde{\Omega})=1/\lambda_1(\widetilde{\Omega})$, where $\lambda_1(\widetilde{\Omega})$ is the first Dirichlet eigenvalue of Laplacian in a domain $\widetilde{\Omega}\subset\mathbb C$.
\end{remark}

\section{Upper estimates for $\lambda_1(A,\Omega)$}

In this section we get upper estimates for the first Dirichlet eigenvalues of elliptic operators in divergence form in planar domains that satisfy the quasihyperbolic boundary condition \cite{KOT01,KOT02}.
 For this goal, we will use the spectral stability estimates have obtained in \cite{GPU2019}. Namely
\begin{theorem}\label{Stability}
Let $A$ belongs to a class  $M^{2 \times 2}(\Omega)$ and a domain $\Omega$ be $A$-quasiconformal $\beta$-regular about $\widetilde{\Omega}$.
Then for any $n\in \mathbb N$
\begin{multline*}
|\lambda_n[A, \Omega]-\lambda_n[A, \widetilde{\Omega}]|
\leq c_n A^2_{\frac{4\beta}{\beta -1},2}(\widetilde{\Omega}) \\
\times \left(|\widetilde{\Omega}|^{\frac{1}{2\beta}} +
\|J_{\varphi^{-1}}\,|\,L^{\beta}(\widetilde{\Omega})\|^{\frac{1}{2}} \right) \cdot
\|1-J_{\varphi^{-1}}^{\frac{1}{2}}\,|\,L^{2}(\widetilde{\Omega})\|,
\end{multline*}
where
$c_n=\max\left\{\lambda_n^2[A, \Omega], \lambda_n^2[A, \widetilde{\Omega}]\right\}$,
$J_{\varphi^{-1}}$ is a Jacobian of inverse mapping to $A$-quasiconformal homeomorphism $\varphi:\Omega \to \widetilde{\Omega}$, and
\[
A_{\frac{4\beta}{\beta -1},2}(\widetilde{\Omega}) \leq \inf\limits_{p\in \left(\frac{4\beta}{3\beta -1},2\right)}
\left(\frac{p-1}{2-p}\right)^{\frac{p-1}{p}}
\frac{\left(\sqrt{\pi}\cdot\sqrt[p]{2}\right)^{-1}|\widetilde{\Omega}|^{\frac{\beta-1}{4\beta}}}{\sqrt{\Gamma(2/p) \Gamma(3-2/p)}}~~.
\]
\end{theorem}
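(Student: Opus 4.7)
The plan is to use the $A$-quasiconformal change of variables $\varphi:\Omega\to\widetilde{\Omega}$ to pull back the eigenvalue problem on $\Omega$ into a \emph{weighted} eigenvalue problem on $\widetilde{\Omega}$, and then invoke the min--max principle to compare it with the corresponding unweighted problem on $\widetilde{\Omega}$. Concretely, for $g := f\circ\varphi^{-1}$, Theorem~\ref{L4.1} supplies the identity
\[
\iint_{\Omega}\langle A\nabla f,\nabla f\rangle\, dxdy \;=\; \iint_{\widetilde{\Omega}} |\nabla g|^2\, dudv,
\]
while the quasiconformal change of variable formula yields
\[
\iint_{\Omega}|f|^2\,dxdy \;=\; \iint_{\widetilde{\Omega}} |g|^2 J_{\varphi^{-1}}\, dudv.
\]
Hence the Rayleigh quotient for $L_A$ on $\Omega$ is a Rayleigh quotient on $\widetilde{\Omega}$ whose denominator carries the weight $J_{\varphi^{-1}}$. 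The problem of controlling $|\lambda_n[A,\Omega]-\lambda_n[A,\widetilde{\Omega}]|$ reduces to measuring how far $J_{\varphi^{-1}}$ is from $1$.

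The key step to isolate is the following inequality: for every $g\in W^{1,2}_0(\widetilde{\Omega})$,
\[
\Bigl|\iint_{\widetilde{\Omega}}|g|^2\bigl(1-J_{\varphi^{-1}}\bigr)\,dudv\Bigr| \;\leq\; M(A,\Omega)\cdot\iint_{\widetilde{\Omega}}|\nabla g|^2\,dudv,
\]
with $M(A,\Omega)$ equal to the product $A^2_{4\beta/(\beta-1),2}(\widetilde{\Omega})\bigl(|\widetilde{\Omega}|^{1/(2\beta)}+\|J_{\varphi^{-1}}\|_{L^\beta(\widetilde{\Omega})}^{1/2}\bigr)\|1-J_{\varphi^{-1}}^{1/2}\|_{L^2(\widetilde{\Omega})}$ appearing in the statement (modulo the $c_n$ factor). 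To obtain it, I would factor $1-J_{\varphi^{-1}} = (1-J_{\varphi^{-1}}^{1/2})(1+J_{\varphi^{-1}}^{1/2})$ and apply Cauchy--Schwarz in $L^2(\widetilde{\Omega})$, extracting the factor $\|1-J_{\varphi^{-1}}^{1/2}\|_{L^2}$ and leaving $\|g^2(1+J_{\varphi^{-1}}^{1/2})\|_{L^2}$. The triangle inequality splits the latter into $\|g\|_{L^4}^2$ and $(\iint g^4 J_{\varphi^{-1}}\,dudv)^{1/2}$. For the second piece, Hölder with conjugate exponents $\beta$ and $\beta/(\beta-1)$ yields $\|J_{\varphi^{-1}}\|_{L^\beta}^{1/2}\|g\|_{L^{4\beta/(\beta-1)}}^2$; for the first, a size--Hölder produces $|\widetilde{\Omega}|^{1/(2\beta)}\|g\|_{L^{4\beta/(\beta-1)}}^2$. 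Finally, Theorem~\ref{PoinConst} applied with $r=4\beta/(\beta-1)$ converts $\|g\|_{L^{4\beta/(\beta-1)}}^2$ into $A^2_{4\beta/(\beta-1),2}(\widetilde{\Omega})\,\|\nabla g\|_{L^2}^2$.

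With the key inequality in hand, I would close the argument by the standard min--max perturbation trick: insert the (normalized) $n$-th eigenfunction of one problem as a trial function into the Rayleigh quotient of the other, and rewrite the difference of the two quotients as
\[
\frac{\iint|\nabla g|^2}{\iint|g|^2 J_{\varphi^{-1}}}\;-\;\frac{\iint|\nabla g|^2}{\iint|g|^2}
\;=\;\frac{\iint|\nabla g|^2\,\cdot\bigl(\iint|g|^2-\iint|g|^2 J_{\varphi^{-1}}\bigr)}{\iint|g|^2\;\iint|g|^2 J_{\varphi^{-1}}}.
\]
Applying the key inequality and using the variational characterization of the eigenvalue to replace the remaining Dirichlet integral by $\lambda_n$ gives two factors of the eigenvalue, one from normalizing the quotient and one from the Dirichlet energy estimate, which is exactly why $c_n=\max\{\lambda_n^2[A,\Omega],\lambda_n^2[A,\widetilde{\Omega}]\}$ appears. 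Carrying out the argument in both directions yields the two-sided bound.

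The main obstacle is the bookkeeping in this last min--max step: the estimate must be organized symmetrically so that it produces the maximum of the two squared eigenvalues as a single prefactor, and so that the same constant $M(A,\Omega)$ controls both directions simultaneously. The analytic ingredients (isometry, change of variables, Hölder with exponents tuned to $\beta$, and the sharp Sobolev--Poincaré constant from Theorem~\ref{PoinConst}) are already in place; once the perturbation identity is written in the symmetric form above, the estimate follows by a single application of the key inequality.
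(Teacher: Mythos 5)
This theorem is not proved in the paper: it is imported verbatim from the companion paper \cite{GPU2019}, so there is no in-paper proof to compare against. Assessing the proposal on its own merits:

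Your derivation of the \emph{key inequality} is correct and is the right analytic lemma. Starting from the isometry of Theorem~\ref{L4.1} and the change of variables, the chain factor $1-J_{\varphi^{-1}}=(1-J_{\varphi^{-1}}^{1/2})(1+J_{\varphi^{-1}}^{1/2})$, Cauchy--Schwarz in $L^2(\widetilde\Omega)$, triangle inequality, and two applications of H\"older with exponents $(\beta,\beta/(\beta-1))$ correctly produce the factor
\[
A^2_{\frac{4\beta}{\beta-1},2}(\widetilde\Omega)\left(|\widetilde\Omega|^{\frac{1}{2\beta}}+\|J_{\varphi^{-1}}\,|\,L^{\beta}(\widetilde\Omega)\|^{\frac12}\right)\|1-J_{\varphi^{-1}}^{\frac12}\,|\,L^{2}(\widetilde\Omega)\|
\]
after invoking Theorem~\ref{PoinConst} at $r=4\beta/(\beta-1)$. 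This is the heart of the argument.

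There is, however, a genuine gap in the closure of the min--max step, and it is not merely ``bookkeeping.'' First, for $n>1$ you cannot ``insert the $n$-th eigenfunction of one problem as a trial function into the Rayleigh quotient of the other'': the min--max principle requires you to test against the full $n$-dimensional span $V_n$ of the first $n$ eigenfunctions of one operator, over which $R_A(u)\le\lambda_n^A$ holds uniformly but where the maximizer $u_0$ of the perturbed quotient $R_B$ is generally a nontrivial linear combination. Second, and more seriously, your perturbation identity gives, after applying the key inequality,
\[
R_B(u_0)-R_A(u_0)\;\le\; M\,R_A(u_0)\,R_B(u_0),
\]
which together with $R_A(u_0)\le\lambda_n^A$ and $\mu_n^B\le R_B(u_0)$ only yields $\mu_n^B\le\lambda_n^A+M\lambda_n^A\,R_B(u_0)$ --- the quantity $R_B(u_0)$ appears on the right and cannot be replaced by $\mu_n^B$ (it may well exceed $\mu_n^B$). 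Closing this circular dependence requires rearranging to $R_B(u_0)\bigl(1-M R_A(u_0)\bigr)\le R_A(u_0)$ under an implicit smallness condition (e.g.\ $M\lambda_n^A<1$), or some other device, and only then does a bidirectional argument give the factor $c_n=\max\{\lambda_n^2[A,\Omega],\lambda_n^2[A,\widetilde\Omega]\}$ as the prefactor (using $\lambda_n\mu_n\le\max\{\lambda_n,\mu_n\}^2$). As written, the final paragraph of your proposal asserts the conclusion without performing this absorption, so the argument does not actually terminate.
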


According to Theorem~\ref{Stability} and some classical results of the spectral theory of elliptic
operators, we get the following result.

\begin{theorem}\label{Th-5.2}
Let $A$ belongs to a class  $M^{2 \times 2}(\Omega)$ and $\Omega$ be an $A$-quasiconformal $\beta$-regular domain of area $\pi$.
Then we have
\begin{multline*}
\lambda_1(A,\Omega) \leq K \lambda_1(\mathbb D) + A^2_{\frac{4\beta}{\beta -1},2}(\mathbb D) K^2 \lambda_1^2(\mathbb D_\rho) \\
\times \left(\pi^{\frac{1}{2\beta}} +
\|J_{\varphi^{-1}}\,|\,L^{\beta}(\mathbb D)\|^{\frac{1}{2}} \right) \cdot
\|1-J_{\varphi^{-1}}^{\frac{1}{2}}\,|\,L^{2}(\mathbb D)\|,
\end{multline*}
where $\mathbb D_\rho$ is the largest disk inscribed in $\Omega$.
\end{theorem}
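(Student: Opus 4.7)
\medskip

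The plan is to apply the spectral stability bound of Theorem~\ref{Stability} with $\widetilde{\Omega}$ taken to be the unit disk $\mathbb D$. Since $\Omega$ is a simply connected planar domain that is $A$-quasiconformal $\beta$-regular, the Riemann measurable mapping theorem together with the prescribed complex dilatation \eqref{ComDil} produces an $A$-quasiconformal mapping $\varphi:\Omega\to\widetilde{\Omega}$; post-composing with a conformal automorphism we may assume $\widetilde{\Omega}=\mathbb D$, and the normalization $|\Omega|=\pi=|\mathbb D|$ is consistent with this choice and gives the factor $|\widetilde{\Omega}|^{1/(2\beta)}=\pi^{1/(2\beta)}$ appearing on the right-hand side of the target inequality. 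Theorem~\ref{Stability} then yields
\[
\lambda_1(A,\Omega)\;\leq\;\lambda_1(A,\mathbb D)\;+\;c_{1}\,A^{2}_{\frac{4\beta}{\beta-1},2}(\mathbb D)\,\bigl(\pi^{\frac{1}{2\beta}}+\|J_{\varphi^{-1}}|L^{\beta}(\mathbb D)\|^{\frac12}\bigr)\,\|1-J_{\varphi^{-1}}^{\frac12}|L^{2}(\mathbb D)\|,
\]
with $c_{1}=\max\{\lambda_1^{2}(A,\Omega),\lambda_1^{2}(A,\mathbb D)\}$.

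\medskip

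Next I would bound the two scalar quantities $\lambda_1(A,\mathbb D)$ and $c_{1}$ by the model quantities that appear in the statement. Uniform ellipticity \eqref{UEC} gives $\langle A(z)\nabla f,\nabla f\rangle\leq K|\nabla f|^{2}$, and feeding this directly into the Rayleigh quotient yields $\lambda_{1}(A,\mathbb D)\leq K\lambda_{1}(\mathbb D)$, which handles the leading term. For the constant $c_{1}$, the idea is to use domain monotonicity of Dirichlet eigenvalues: since the largest inscribed disk satisfies $\mathbb D_\rho\subset\Omega$, we have $\lambda_1(A,\Omega)\leq\lambda_1(A,\mathbb D_\rho)\leq K\lambda_1(\mathbb D_\rho)$ by the same ellipticity argument. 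On the other hand, the area constraint $|\Omega|=\pi$ forces $\rho\leq 1$, so $\lambda_{1}(\mathbb D_\rho)=j_{0,1}^{2}/\rho^{2}\geq j_{0,1}^{2}=\lambda_{1}(\mathbb D)$, and thus $\lambda_1(A,\mathbb D)\leq K\lambda_1(\mathbb D)\leq K\lambda_1(\mathbb D_\rho)$ as well. Combining the two bounds,
\[
c_{1}\;\leq\;K^{2}\lambda_{1}^{2}(\mathbb D_{\rho}).
\]

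\medskip

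Substituting the estimates $\lambda_1(A,\mathbb D)\leq K\lambda_1(\mathbb D)$ and $c_{1}\leq K^{2}\lambda_{1}^{2}(\mathbb D_{\rho})$ into the stability inequality produces exactly the claimed upper bound for $\lambda_{1}(A,\Omega)$. The argument is essentially a packaging step: every analytical ingredient (the stability estimate, the weighted Poincar\'e--Sobolev machinery behind $A_{\frac{4\beta}{\beta-1},2}(\mathbb D)$, the quasiconformal composition operator identification) is already supplied by Theorems~\ref{L4.1}, \ref{Th4.1}, \ref{Th4.3} and \ref{Stability}.

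\medskip

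The main obstacle is the correct control of the prefactor $c_1=\max\{\lambda_1^2(A,\Omega),\lambda_1^2(A,\mathbb D)\}$: one cannot a priori estimate $\lambda_1(A,\Omega)$ in terms of $|\Omega|$ alone, because for domains with quasihyperbolic (possibly non-rectifiable) boundaries the usual Faber--Krahn-type upper bounds are not available. The device of bounding $\lambda_{1}(A,\Omega)$ via the largest inscribed disc $\mathbb D_\rho$ is what introduces the factor $\lambda_{1}^{2}(\mathbb D_\rho)$ in the final estimate and is the only place where the geometry of $\Omega$, beyond its quasiconformal type, really enters the argument.
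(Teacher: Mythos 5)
Your proposal is correct and follows essentially the same route as the paper: apply Theorem~\ref{Stability} with $n=1$, $\widetilde{\Omega}=\mathbb D$, bound $\lambda_1(A,\cdot)$ via uniform ellipticity, and control the prefactor $c_1$ by the inscribed disk $\mathbb D_\rho$ through domain monotonicity. The only cosmetic difference is that the paper bounds $c_1$ by first comparing $\lambda_1(\Omega)$ with $\lambda_1(\mathbb D)$ via the Rayleigh--Faber--Krahn inequality before invoking monotonicity, whereas you bypass Faber--Krahn by applying monotonicity directly to the $A$-operator and noting the elementary fact $\rho\leq 1$; both yield $c_1\leq K^2\lambda_1^2(\mathbb D_\rho)$.
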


\begin{proof}
Because $\Omega$ is an $A$-quasiconformal $\beta$-regular domain, by Theorem~\ref{Stability} in the case $n=1$ and
$\widetilde{\Omega}=\mathbb D$, we have the following estimate:
\begin{multline*}
\lambda_1(A,\Omega) \leq \lambda_1(A, \mathbb D) + \max\left\{\lambda_1^2(A, \Omega), \lambda_1^2(A, \mathbb D)\right\} A^2_{\frac{4\beta}{\beta -1},2}(\mathbb D) \\
\times \left(\pi^{\frac{1}{2\beta}} +
\|J_{\varphi^{-1}}\,|\,L^{\beta}(\mathbb D)\|^{\frac{1}{2}} \right) \cdot
\|1-J_{\varphi^{-1}}^{\frac{1}{2}}\,|\,L^{2}(\mathbb D)\|.
\end{multline*}
Using the uniform ellipticity condition \eqref{UEC} we obtain
\begin{multline}\label{Ineq0}
\lambda_1(A,\Omega) \leq K \lambda_1(\mathbb D) + K^2 \max\left\{\lambda_1^2(\Omega), \lambda_1^2(\mathbb D)\right\} A^2_{\frac{4\beta}{\beta -1},2}(\mathbb D) \\
\times \left(\pi^{\frac{1}{2\beta}} +
\|J_{\varphi^{-1}}\,|\,L^{\beta}(\mathbb D)\|^{\frac{1}{2}} \right) \cdot
\|1-J_{\varphi^{-1}}^{\frac{1}{2}}\,|\,L^{2}(\mathbb D)\|.
\end{multline}
Further we indicate the maximum between $\lambda_1(\Omega)$ and $\lambda_1(\mathbb D)$.

Taking into account the Rayleigh-Faber-Krahn inequality (see, for example, \cite{GN13}), which states that the
disk minimizes the first Dirichlet eigenvalue among all planar domains of the same area,
i.e.,
\[
\lambda_1(\Omega) \geq \lambda_1(\mathbb D)=j_{0,1}^2,
\]
we get
\[
\max\left\{\lambda_1^2(\Omega), \lambda_1^2(\mathbb D)\right\}=\lambda_1^2(\Omega).
\]
Here $j_{0,1} \approx 2.4048$ is the first positive zero of the Bessel function $J_0$.

In turn, the property of monotonicity for the Dirichlet eigenvalues (see, for example, \cite{GN13}) implies the following estimate:
\begin{equation}\label{Mon}
\lambda_1(\Omega) \leq \lambda_1(\mathbb D_\rho)=\frac{j_{0,1}^2}{\rho^2}.
\end{equation}

Finally, combining inequalities \eqref{Mon}, \eqref{Ineq0}, we obtain the required result.
\end{proof}

Let $\varphi:\Omega\to\mathbb D$ be $A$-quasiconformal mappings. We note that there exist so-called volume-preserving maps, i.e. $|J(z,\varphi)|=1$, $z \in \Omega$. In this case, we call a domain $\Omega$ a volume-preserving $A$-quasiconformal $\beta$-regular domain.  For such domains we have:
\begin{corollary}\label{Cor5.3}
Let $A$ belongs to a class  $M^{2 \times 2}(\Omega)$ and $\Omega$ be a volume-preserving $A$-quasiconformal $\beta$-regular domain.
Then the following estimate holds
\[
\lambda_1(A,\Omega) \leq K \lambda_1(\mathbb D).
\]
\end{corollary}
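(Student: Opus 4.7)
The plan is to read this off directly from Theorem~\ref{Th-5.2} by observing that the volume-preserving hypothesis kills the error term. First I would check that the area hypothesis of Theorem~\ref{Th-5.2} is automatically satisfied: since $\varphi:\Omega\to\mathbb D$ is $A$-quasiconformal with $|J(z,\varphi)|=1$ almost everywhere on $\Omega$, the change of variables formula for quasiconformal mappings gives
\[
|\Omega|=\iint_\Omega |J(z,\varphi)|\,dxdy=|\mathbb D|=\pi,
\]
so $\Omega$ is an $A$-quasiconformal $\beta$-regular domain of area $\pi$ and Theorem~\ref{Th-5.2} applies.

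Next I would examine the Jacobian of the inverse mapping $\varphi^{-1}:\mathbb D\to\Omega$. Since $\varphi$ is a quasiconformal homeomorphism with $|J(z,\varphi)|=1$ a.e., the standard identity $J_{\varphi^{-1}}(w)=1/J_\varphi(\varphi^{-1}(w))$ (valid a.e.\ on $\mathbb D$) shows that $J_{\varphi^{-1}}(w)=1$ almost everywhere on $\mathbb D$. Consequently
\[
\bigl\|1-J_{\varphi^{-1}}^{\frac{1}{2}}\,\big|\,L^{2}(\mathbb D)\bigr\|=0,
\]
which is the decisive observation: this factor appears as a multiplicative weight in the second summand of the upper bound provided by Theorem~\ref{Th-5.2}.

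Finally I would substitute into the conclusion of Theorem~\ref{Th-5.2}. The second summand has the form
\[
A^2_{\frac{4\beta}{\beta-1},2}(\mathbb D)\,K^2\lambda_1^2(\mathbb D_\rho)\left(\pi^{\frac{1}{2\beta}}+\|J_{\varphi^{-1}}\,|\,L^\beta(\mathbb D)\|^{\frac{1}{2}}\right)\cdot\bigl\|1-J_{\varphi^{-1}}^{\frac{1}{2}}\,|\,L^2(\mathbb D)\bigr\|,
\]
and the last factor is zero, so this whole summand vanishes (the other factors are finite: $\|J_{\varphi^{-1}}\,|\,L^\beta(\mathbb D)\|=\pi^{1/\beta}$, and $\lambda_1(\mathbb D_\rho)$ is finite since $\rho>0$ for any bounded domain with non-empty interior). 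What remains is exactly $\lambda_1(A,\Omega)\le K\lambda_1(\mathbb D)$.

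There is no real obstacle here; the only mild subtlety is justifying that $J_{\varphi^{-1}}=1$ a.e.\ from $J_\varphi=1$ a.e., which follows from the absolute continuity and Lusin $(N)$-properties of quasiconformal homeomorphisms, together with the a.e.\ differentiability that underlies the change-of-variables formula used above.
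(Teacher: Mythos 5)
Your proof is correct and is precisely the derivation the paper has in mind: the paper states Corollary~\ref{Cor5.3} without proof, presenting it as an immediate consequence of Theorem~\ref{Th-5.2} once one observes that the volume-preserving hypothesis forces $J_{\varphi^{-1}}\equiv 1$ a.e., which both normalizes $|\Omega|=\pi$ and annihilates the error term via $\|1-J_{\varphi^{-1}}^{1/2}\,|\,L^2(\mathbb D)\|=0$. Your remark that the companion factor $\|J_{\varphi^{-1}}\,|\,L^\beta(\mathbb D)\|=\pi^{1/\beta}$ is finite, so no $0\cdot\infty$ ambiguity arises, is a sensible extra check that the paper leaves implicit.
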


As applications of Corollary \ref{Cor5.3} we consider several examples.

$\mathbf{Example \, 1.}$ The homeomorphism
$$
\varphi(z)= z e^{2i \log |z|},\,\, \varphi(0)=0, \quad z=x+iy,
$$
is $A$-quasiconformal and maps the unit disc $\mathbb D$
onto itself, transform radial lines into spiral infinitely winding around the origin.
The mapping $\varphi$ satisfies the Beltrami equation with
\[
\mu(z)=\frac{\varphi_{\overline{z}}}{\varphi_{z}}=\frac{i e^{2i \log |z|} \cdot z/\overline{z}}{(1+i)e^{2i \log |z|}}=\frac{1+i}{2}\frac{z}{\overline{z}}
\]
and the Jacobian $J(z,\varphi)=|\varphi_{z}|^2-|\varphi_{\overline{z}}|^2=1$.
Because
\[
\Real \mu(z) = \frac{x^2-y^2-2xy}{2(x^2+y^2)}, \quad \Imag \mu(z)=\frac{x^2-y^2+2xy}{2(x^2+y^2)}, \quad z=x+iy,
\]
we see that $\mu$ induces, by formula \eqref{Matrix-F}, the matrix function $A(z)$ with the following entries: 
\[
a_{11}=\frac{|1- \mu|^2}{1-|\mu|^2}=3- 2 \frac{x^2-y^2-2xy}{x^2+y^2},
\]
\[
a_{12}=a_{21}=\frac{-2 \Imag \mu}{1-|\mu|^2}=-2 \frac{x^2-y^2+2xy}{x^2+y^2},
\]
\[
a_{22}=\frac{|1+ \mu|^2}{1-|\mu|^2}=3+2 \frac{x^2-y^2-2xy}{x^2+y^2},
\]
or in the polar coordinates $z=\rho e^{i \theta}$ it has the form
$$
A=\begin{pmatrix} 3-2\sqrt{2} \cos(2\theta + \pi/4) & -2\sqrt{2} \sin(2\theta + \pi/4) \\ -2\sqrt{2} \sin(2\theta + \pi/4) & 3+2\sqrt{2} \cos(2\theta + \pi/4) \end{pmatrix}.
$$
Since $|J(w,\varphi^{-1})|=|J(z,\varphi)|^{-1}$ then by Corollary \ref{Cor5.3} we obtain
\[
\lambda_1(A,\mathbb D) \leq \frac{2+\sqrt{2}}{2-\sqrt{2}} \cdot \lambda_1(\mathbb D)=\frac{2+\sqrt{2}}{2-\sqrt{2}} \cdot j_{0,1}^2.
\]

$\mathbf{Example \, 2.}$ The homeomorphism
\[
\varphi(z)= \sqrt{a^2+1}z-a \overline{z}, \quad z=x+iy, \quad a\geq 0,
\]
is a $A$-quasiconformal and maps the interior of ellipse
$$
\Omega_e= \left\{(x,y) \in \mathbb R^2: \frac{x^2}{(\sqrt{a^2+1}+a)^2}+\frac{y^2}{(\sqrt{a^2+1}-a)^2}=1\right\}
$$
onto the unit disc $\mathbb D.$ The mapping $\varphi$ satisfies the Beltrami equation with
\[
\mu(z)=\frac{\varphi_{\overline{z}}}{\varphi_{z}}=-\frac{a}{\sqrt{a^2+1}}
\]
and the Jacobian $J(z,\varphi)=|\varphi_{z}|^2-|\varphi_{\overline{z}}|^2=1$.
It is easy to verify that $\mu$ induces, by formula \eqref{Matrix-F}, the matrix function $A(z)$ form
$$
A=\begin{pmatrix} (\sqrt{a^2+1}+a)^2 & 0 \\ 0 &  (\sqrt{a^2+1}-a)^2 \end{pmatrix}.
$$
Given that $|J(w,\varphi^{-1})|=|J(z,\varphi)|^{-1}$. Then by Corollary \ref{Cor5.3} we have
\[
\lambda_1(A,\Omega_e) \leq \frac{\sqrt{a^2+1}+a}{\sqrt{a^2+1}-a} \cdot \lambda_1(\mathbb D)= \frac{\sqrt{a^2+1}+a}{\sqrt{a^2+1}-a} \cdot j_{0,1}^2.
\]

$\mathbf{Example \, 3.}$ The homeomorphism
\[
\varphi(z)= \frac{z^{\frac{3}{2}}}{\sqrt{2} \cdot \overline{z}^{\frac{1}{2}}}-1,\,\, \varphi(0)=-1, \quad z=x+iy,
\]
is $A$-quasiconformal and maps the interior of the ``rose petal"
$$
\Omega_p:=\left\{(\rho, \theta) \in \mathbb R^2:\rho=2\sqrt{2}\cos(2 \theta), \quad -\frac{\pi}{4} \leq \theta \leq \frac{\pi}{4}\right\}
$$
onto the unit disc $\mathbb D$.
The mapping $\varphi$ satisfies the Beltrami equation with
\[
\mu(z)=\frac{\varphi_{\overline{z}}}{\varphi_{z}}=-\frac{1}{3}\frac{z}{\overline{z}}
\]
and the Jacobian $J(z,\varphi)=|\varphi_{z}|^2-|\varphi_{\overline{z}}|^2=1$.
We see that $\mu$ induces, by formula \eqref{Matrix-F}, the matrix function $A(z)$, which in the polar coordinates $z=\rho e^{i \theta}$ has the form
$$
A=\begin{pmatrix} 2\cos^2{\theta}+1/2\sin^2{\theta} & 3/4\sin{2\theta} \\ 3/4\sin{2\theta} & 1/2\cos^2{\theta}+2\sin^2{\theta} \end{pmatrix}.
$$
Since $|J(w,\varphi^{-1})|=|J(z,\varphi)|^{-1}$ then by Corollary \ref{Cor5.3} we obtain
\[
\lambda_1(A,\Omega_p) \leq 2\lambda_1(\mathbb D)=2 j_{0,1}^2.
\]

\section{Lower estimates for $\lambda_1(A,\Omega)$}

In this section we get lower estimates for the first Dirichlet eigenvalues of elliptic operators in divergence form in volume-preserving $A$-quasiconformal $\infty$-regular domains. For such domains Theorem~\ref{Th4.3} rewrite as

\begin{theorem}\label{Th5.1}
Let $A$ belongs to a class  $M^{2 \times 2}(\Omega)$ and $\Omega$ be a volume-preserving $A$-quasiconformal $\infty$-regular domain. Then
for any function $f \in W^{1,2}_{0}(\Omega, A)$, the Sobolev-Poincar\'e inequality
\[
\|f\mid L^2(\Omega)\| \leq B_{2,2}(A,\Omega)
\|f\mid L^{1,2}_{A}(\Omega)\|
\]
holds with the constant $B_{2,2}(A,\Omega) \leq B_{2,2}(\mathbb D)$.
\end{theorem}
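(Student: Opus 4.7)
The plan is to obtain this as a direct specialization of Theorem~\ref{Th4.3}(2) to the volume-preserving regime. Let $\varphi:\Omega\to\mathbb{D}$ be the $A$-quasiconformal mapping associated with the matrix $A$ (as constructed via the Beltrami equation in Section~2), where $\mathbb{D}$ is the disc singled out by the volume-preserving condition $|J(z,\varphi)|=1$ a.e.\ on $\Omega$. Since $\Omega$ is assumed $\infty$-regular, the hypotheses of Theorem~\ref{Th4.3}(2) are in force with $\widetilde{\Omega}=\mathbb{D}$, so for every $f\in W^{1,2}_0(\Omega,A)$ we have
\[
\|f\mid L^2(\Omega)\| \leq B_{2,2}(\mathbb{D})\,\bigl\|J_{\varphi^{-1}}\mid L^{\infty}(\mathbb{D})\bigr\|^{\frac{1}{2}}\,\|f\mid L^{1,2}_A(\Omega)\|.
\]

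The remaining step is to evaluate the $L^\infty$ norm of $J_{\varphi^{-1}}$. Because $\varphi$ is a homeomorphism of finite distortion, the change-of-variable formula for quasiconformal mappings gives $|J(w,\varphi^{-1})|=1/|J(\varphi^{-1}(w),\varphi)|$ for a.e.\ $w\in\mathbb{D}$. The volume-preserving hypothesis $|J(z,\varphi)|=1$ a.e.\ on $\Omega$ then forces $|J(w,\varphi^{-1})|=1$ a.e.\ on $\mathbb{D}$, so $\|J_{\varphi^{-1}}\mid L^\infty(\mathbb{D})\|=1$.

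Substituting this into the bound from Theorem~\ref{Th4.3}(2) yields $B_{2,2}(A,\Omega)\leq B_{2,2}(\mathbb{D})$, which is precisely the claim. In truth there is no genuine obstacle here: the statement is a packaging of Theorem~\ref{Th4.3}(2) once one observes that the $L^\infty$ weight factor collapses to one under the volume-preserving assumption. The only subtle point worth flagging is the implicit identification $\widetilde{\Omega}=\mathbb{D}$, which is justified by the convention used throughout Section~3 that ``volume-preserving'' refers to an $A$-quasiconformal mapping onto the unit disc with unit Jacobian.
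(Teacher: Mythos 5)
Your proposal is correct and matches the paper's approach exactly: the paper itself presents Theorem~\ref{Th5.1} as Theorem~\ref{Th4.3}(2) ``rewritten'' for the volume-preserving case, and your derivation supplies the implicit details, namely that $|J(w,\varphi^{-1})|=1/|J(\varphi^{-1}(w),\varphi)|=1$ a.e.\ on $\mathbb{D}$ under the volume-preserving hypothesis, so the weight factor $\|J_{\varphi^{-1}}\mid L^\infty(\mathbb{D})\|^{1/2}$ collapses to $1$. No gaps.
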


 Here the constant $B_{2,2}^2(\mathbb D)=1/\lambda_1(\mathbb D)$, where $\lambda_1(\mathbb D)=j^2_{0,1}$ is the first Dirichlet eigenvalue of Laplacian in a disc $\mathbb D$.

Given this theorem we obtain the following assertion:

\begin{corollary}\label{Th5.2}
Let $A$ belongs to a class  $M^{2 \times 2}(\Omega)$ and $\Omega$ be a volume-preserving $A$-quasiconformal $\infty$-regular domain.
Then the following estimate holds
\[
\lambda_1(A,\Omega) \geq \lambda_1(\mathbb D).
\]

\end{corollary}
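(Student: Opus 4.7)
The plan is to read off the lower bound directly from Theorem~\ref{Th5.1} by feeding it into the Rayleigh quotient characterization of $\lambda_1(A,\Omega)$ recalled in the introduction. Since $\Omega$ is volume-preserving $A$-quasiconformal $\infty$-regular, Theorem~\ref{Th5.1} supplies the Sobolev--Poincar\'e inequality
\[
\|f\mid L^2(\Omega)\|^2 \leq B_{2,2}^2(A,\Omega)\,\|f\mid L^{1,2}_A(\Omega)\|^2
\]
for every $f\in W^{1,2}_0(\Omega,A)$, with $B_{2,2}(A,\Omega)\leq B_{2,2}(\mathbb D)$.

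Next I would rearrange this inequality for $f\not\equiv 0$ to produce a uniform lower bound on the Rayleigh quotient:
\[
\frac{\iint_\Omega \langle A(z)\nabla f(z),\nabla f(z)\rangle\,dxdy}{\iint_\Omega |f(z)|^2\,dxdy}
=\frac{\|f\mid L^{1,2}_A(\Omega)\|^2}{\|f\mid L^2(\Omega)\|^2}
\geq \frac{1}{B_{2,2}^2(A,\Omega)} \geq \frac{1}{B_{2,2}^2(\mathbb D)}.
\]
Taking the infimum over $f\in W^{1,2}_0(\Omega,A)\setminus\{0\}$, the Min-Max Principle quoted in the introduction identifies the left-hand side with $\lambda_1(A,\Omega)$, giving $\lambda_1(A,\Omega)\geq 1/B_{2,2}^2(\mathbb D)$.

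The only remaining step is to recognize, as noted in the remark following Theorem~\ref{Th4.3}, that $B_{2,2}^2(\mathbb D)=1/\lambda_1(\mathbb D)$, i.e.\ the sharp $L^2$ Poincar\'e constant on the disc is precisely the reciprocal of the first Dirichlet eigenvalue of the Laplacian on $\mathbb D$. Substituting this equality yields $\lambda_1(A,\Omega)\geq \lambda_1(\mathbb D)$, which is the desired conclusion. There is no genuine obstacle here; the content has been front-loaded into Theorem~\ref{Th5.1}, whose proof in turn rests on the isometry of Sobolev spaces induced by the $A$-quasiconformal mapping (Theorem~\ref{L4.1}) together with the volume-preserving assumption $|J(z,\varphi)|\equiv 1$. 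The mild subtlety to double-check is that $f\in W^{1,2}_0(\Omega,A)$ is admissible in Theorem~\ref{Th5.1} (which it is, by the density of $C_0^\infty(\Omega)$ used throughout the paper) so that the infimum in the Min-Max formulation is taken over the same class on both sides of the comparison.
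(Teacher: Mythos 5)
Your proof is correct and follows essentially the same route as the paper: apply Theorem~\ref{Th5.1} to get the $L^2$ Poincar\'e inequality with constant $B_{2,2}(A,\Omega)\leq B_{2,2}(\mathbb D)$, combine with the Min-Max characterization of $\lambda_1(A,\Omega)$, and invoke $B_{2,2}^2(\mathbb D)=1/\lambda_1(\mathbb D)$. You have simply spelled out the Rayleigh-quotient rearrangement and the admissibility of test functions, which the paper leaves implicit.
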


\begin{proof}
By the Min-Max principle and Theorem~\ref{Th5.1} we have
\[
\left(\iint\limits_{\Omega} |f(z)|^2 dxdy\right) \leq B^2_{2,2}(A,\Omega)
\iint\limits_\Omega \left\langle A(z) \nabla f(z), \nabla f(z) \right\rangle dxdy,
\]
where
$$
B_{2,2}(A,\Omega) \leq B_{2,2}(\mathbb D).
$$

Thus,
\[
\lambda_1(A,\Omega) \geq \lambda_1(\mathbb D).
\]

\end{proof}

\section{Spectral estimates in quasidiscs}

In this section we refine Theorem~\ref{Th-5.2} for Ahlfors-type domains (i.e. quasidiscs) $\Omega\subset\mathbb C$.
Recall that a domain $\Omega$ is called a $K$-quasidisc if it is the image of the unit disc $\mathbb D$ under a $K$-quasicon\-for\-mal homeomorphism of the plane onto itself. A domain $\Omega$ is a quasidisc if it is a $K$-quasidisc for some $K \geq 1$.

According to \cite{GH01}, the boundary of any $K$-quasidisc $\Omega$
admits a $K^{2}$-quasi\-con\-for\-mal reflection and thus, for example,
any quasiconformal homeomorphism $\psi:\mathbb{D}\to\Omega$ can be
extended to a $K^{2}$-quasiconformal homeomorphism of the whole plane
to itself.

Recall that for any planar $K$-quasiconformal homeomorphism $\psi:\Omega\rightarrow \widetilde{\Omega}$
the following sharp result is known: $J(z,\psi)\in L^p_{\loc}(\Omega)$
for any $1 \leq p<\frac{K}{K-1}$ (\cite{Ast,G81}).

Given the weak inverse H\"older inequality and the sharp estimates of the constants in doubling conditions for measures generated by Jacobians of quasiconformal mappings \cite{GPU17_2}, we obtain upper estimates of the first eigenvalue of linear
elliptic operators in divergence form with Dirichlet boundary conditions in Ahlfors type domains reformulated in terms of quasiconformal geometry of domains.

\begin{theorem}\label{Quasidisk}
Let $\Omega$ be a $K$-quasidisc of area $\pi$ and $\varphi:\Omega \to \mathbb D$ be an $A$-quasiconformal mapping. Assume that  $1<\beta<\frac{K}{K-1}$. Then
\begin{equation*}
\lambda_1(A,\Omega) \leq K \lambda_1(\mathbb D) + M_{\beta}(K)K^2 \lambda_1^2(\mathbb D_{\rho}) \|1-J_{\varphi^{-1}}^{\frac{1}{2}}\,|\,L^{2}(\mathbb D)\|,
\end{equation*}
where $\mathbb D_{\rho}$ is the largest disk inscribed in $\Omega$.
\end{theorem}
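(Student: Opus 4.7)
The strategy is to reduce Theorem~\ref{Quasidisk} to Theorem~\ref{Th-5.2} by absorbing every piece of data that depends on the particular $A$-quasiconformal map $\varphi$ into a single constant $M_\beta(K)$ depending only on $K$ and $\beta$. Since the area is normalised to $|\mathbb{D}|=|\Omega|=\pi$, the Poincar\'e--Sobolev factor $A^{2}_{\frac{4\beta}{\beta-1},2}(\mathbb{D})$ in the bound of Theorem~\ref{Th-5.2} depends only on $\beta$, as does $\pi^{1/(2\beta)}$. The whole reduction therefore rests on bounding $\|J_{\varphi^{-1}}\,|\,L^{\beta}(\mathbb{D})\|$ by a constant depending only on $K$ and $\beta$.

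First, I would exploit the quasidisc hypothesis through \cite{GH01}: since $\partial\Omega$ admits a $K^{2}$-quasiconformal reflection, the $K$-quasiconformal map $\varphi^{-1}:\mathbb{D}\to\Omega$ extends to a $K^{2}$-quasiconformal homeomorphism $\Phi$ of the whole plane. This global extension, combined with the sharp area distortion theorem of Astala (see \cite{Ast,G81}), is what makes the higher integrability $J_{\varphi^{-1}}\in L^{\beta}(\mathbb{D})$ available throughout the full range $1<\beta<K/(K-1)$, and not merely in $L^{\beta}_{\loc}(\mathbb{D})$.

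Next, I would upgrade this qualitative integrability to a quantitative bound by applying the weak reverse H\"older inequality to the pushforward measure $d\nu=J_{\varphi^{-1}}\,du\,dv$, together with the sharp doubling estimates of \cite{GPU17_2}. Since the total mass $\nu(\mathbb{D})=|\Omega|=\pi$ is fixed, these sharp estimates yield a constant $C(K,\beta)$, independent of the particular mapping $\varphi$, such that
\[
\|J_{\varphi^{-1}}\,|\,L^{\beta}(\mathbb{D})\|\le C(K,\beta).
\]
Setting
\[
M_\beta(K):=A^{2}_{\frac{4\beta}{\beta-1},2}(\mathbb{D})\bigl(\pi^{1/(2\beta)}+C(K,\beta)^{1/2}\bigr)
\]
and substituting this bound into the right-hand side of Theorem~\ref{Th-5.2} immediately produces the claimed inequality.

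The main obstacle is securing this uniform bound on $\|J_{\varphi^{-1}}\,|\,L^{\beta}(\mathbb{D})\|$ with constant depending \emph{only} on $K$ and $\beta$, and not on the particular map $\varphi$ or the geometry of $\Omega$. This is exactly where the sharp Gehring-type reverse H\"older inequality for planar quasiconformal Jacobians, with constants controlled quantitatively in $K$ as developed in \cite{GPU17_2} from Astala's area distortion theorem, becomes essential. Once that uniform estimate is in place, everything else reduces to direct substitution into Theorem~\ref{Th-5.2}.
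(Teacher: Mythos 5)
Your proof is correct and follows essentially the same route as the paper: reduce to Theorem~\ref{Th-5.2} and then bound $\|J_{\varphi^{-1}}\,|\,L^{\beta}(\mathbb{D})\|$ by a constant depending only on $K$ and $\beta$, absorbing everything into $M_\beta(K)$. The paper simply cites a specific quantitative form of your $C(K,\beta)$ (Corollary~5.2 of \cite{GPU2020}, itself built from the reverse H\"older and doubling estimates of \cite{GPU17_2} and Astala's area distortion theorem that you invoke), whereas you keep the constant implicit; the substance of the argument is identical.
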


\begin{remark}
The quantity $M_{\beta}(K)$ in Theorem~\ref{Quasidisk} depends only on a quasiconformality
coefficient K of ${\Omega}$:
\begin{multline*}
M_{\beta}(K):=\inf\limits_{1< \beta <\beta^{*}} \Biggl\{
\inf\limits_{p\in \left(\frac{4 \beta}{3\beta -1},2\right)}
\left(\frac{p-1}{2-p}\right)^{\frac{2(p-1)}{p}}
\frac{\pi^{-\frac{\beta +1}{2\beta}} 4^{-\frac{1}{p}}}{\Gamma(2/p) \Gamma(3-2/p)} \\
\left(\frac{C_{\beta}K \pi^{\frac{1-\beta}{2\beta}}}{2}
\exp\left\{{\frac{K^2 \pi^2(2+ \pi^2)^2}{4\log3}}\right\}\cdot |{\Omega}|^{\frac{1}{2}}+\pi^{\frac{1}{2\beta}}\right) \Biggr\}, \\
C_\beta=\frac{10^{6}}{[(2\beta -1)(1- \nu(\beta))]^{1/2\beta}},
\end{multline*}
where $\beta^{*}=\min{\left(\frac{K}{K-1}, \widetilde{\beta}\right)}$, and $\widetilde{\beta}$ is the unique solution of the equation
$$\nu(\beta):=10^{8 \beta}\frac{2\beta -2}{2\beta -1}(24\pi^2K^2)^{2\beta}=1.
$$
The function $\nu(\beta)$ is a monotone increasing function. Thus for
any $\beta < \beta^{*}$ the number $(1- \nu(\beta))>0$ and $C_\beta > 0$.
\end{remark}

\begin{proof}
Since, for $K\geq 1$, $K$-quasidiscs are $A$-quasiconformal $\beta$-regular domains if $1<\beta<\frac{K}{K-1}$. Therefore, by Theorem~\ref{Th-5.2} for $1<\beta<\frac{K}{K-1}$ we have
\begin{multline}\label{Inequal_1}
\lambda_1(A,\Omega) \leq K \lambda_1(\mathbb D) + A^2_{\frac{4\beta}{\beta -1},2}(\mathbb D) K^2 \lambda_1^2(\mathbb D_\rho) \\
\times \left(\pi^{\frac{1}{2\beta}} +
\|J_{\varphi^{-1}}\,|\,L^{\beta}(\mathbb D)\|^{\frac{1}{2}} \right) \cdot
\|1-J_{\varphi^{-1}}^{\frac{1}{2}}\,|\,L^{2}(\mathbb D)\|.
\end{multline}
Now we estimate the quantity $\|J_{\varphi^{-1}}\,|\,L^{\beta}(\mathbb D)\|$. Taking into account (Corollary~5.2, \cite{GPU2020}) we obtain
\begin{multline}\label{Inequal_2}
\|J_{\varphi^{-1}}\,|\,L^{\beta}(\mathbb D)\| =
\left(\iint\limits_{\mathbb D} |J(w,\varphi^{-1})|^{\beta}~dudv \right)^{\frac{1}{\beta}} \\
\leq \frac{C^2_{\beta} K^2 \pi^{\frac{1-\beta}{\beta}}}{4} \exp\left\{{\frac{K^2 \pi^2(2+ \pi^2)^2}{2\log3}}\right\} \cdot |\Omega|.
\end{multline}
Finally, combining inequality \eqref{Inequal_1} with inequality \eqref{Inequal_2} and given that
\[
A_{\frac{4\beta}{\beta -1},2}^2(\mathbb D) \leq \inf\limits_{p\in \left(\frac{4 \beta}{3\beta -1},2\right)}
\left(\frac{p-1}{2-p}\right)^{\frac{2(p-1)}{p}}
\frac{\pi^{-\frac{\beta +1}{2\beta}} 4^{-\frac{1}{p}}}{\Gamma(2/p) \Gamma(3-2/p)}
\]
we get the desired result.
\end{proof}

\textbf{Acknowledgements.} The first author was supported by the United States-Israel Binational Science Foundation (BSF Grant No. 2014055). The second author was supported by RSF Grant No. 20-71-00037 (Sections 3, 4).

\vskip 0.3cm

\vskip 0.3cm

Department of Mathematics, Ben-Gurion University of the Negev, P.O.Box 653, Beer Sheva, 8410501, Israel

\emph{E-mail address:} \email{vladimir@math.bgu.ac.il} \\

 Division for Mathematics and Computer Sciences, Tomsk Polytechnic University, 634050 Tomsk, Lenin Ave. 30, Russia; Department of Mathematical Analysis and Theory of Functions, Tomsk State University, 634050 Tomsk, Lenin Ave. 36, Russia
							
 \emph{E-mail address:} \email{vpchelintsev@vtomske.ru}   \\
			
	Department of Mathematics, Ben-Gurion University of the Negev, P.O.Box 653, Beer Sheva, 8410501, Israel
							
	\emph{E-mail address:} \email{ukhlov@math.bgu.ac.il}


\begin{thebibliography}{99}

\bibitem{Ahl66}
Ahlfors, L., \emph{Lectures on quasiconformal mappings}, D. Van Nostrand Co., Inc., Toronto, Ont.-New York-London, 1966.

\bibitem{AK}
Astala, K., Koskela, P., \emph{Quasiconformal mappings and global integrability of the derivative}, J. Anal. Math. {\bf 57} (1991), 203--220.

\bibitem{Ast}
Astala, K., \emph{Area distortion of quasiconformal mappings},
Acta Math. {\bf 73} 1994, 37--60.

\bibitem{AIM}
Astala, K., Iwaniec, T., Martin, G., \emph{Elliptic partial differential equations and quasiconformal mappings in the plane}, Princeton University Press, Princeton and Oxford, 2008.

\bibitem{BGMR}
Bojarski, B., Gutlyanski\u{\i}, V., Martio, O, Ryazanov, V.,
\emph{Infinitesimal geometry of quasiconformal and bi-Lipschitz mappings in the plane},
EMS, Zurich (2013).

\bibitem{BHK}
Bonk, M., Heinonen J., Koskela, P., \emph{Uniformizing Gromov hyperbolic spaces}, Astrerisque {\bf 270} (2001), viii+99 pp.

\bibitem{C50}
Courant, R., \emph{Dirichlet's Principle, Conformal Mapping, and Minimal Surfaces}
Springer-Verlag, Berlin-Heidelberg-New York, 1977.

\bibitem{GM}
Gehring, F. W., Martio, O., \emph{Lipschitz classes and  quasiconformal mappings},
Ann. Acad. Sci. Fenn. Ser. A I Math. {\bf 10} (1985), 203--219.

\bibitem{GH01}
Gehring, F. W., Hag, K., \emph{Reflections on reflections in quasidicks}, Report. Univ. Jyv\"askyl\"a {\bf 83} 2001, 81--90.

\bibitem{G81}
Gol'dshtein, V. M., \emph{The degree of summability of generalized
derivatives of quasiconformal homeomorphisms}, Siberian Math. J. {\bf 22} (6) (1981), 821--836.

\bibitem{GGu}
Gol'dshtein, V., Gurov, L., \emph{Applications of change of variables operators for exact embedding theorems},
Integral Equ. Oper. Theory {\bf 19} (1994), 1--24.

\bibitem{GPU17_2}
Gol'dshtein, V., Pchelintsev, V., Ukhlov, A.,
\emph{Integral estimates of conformal derivatives and spectral properties of the Neumann-Laplacian}, J. Math. Anal. Appl. {\bf 463} (2018), 19--39.

\bibitem{GPU19A}
Gol'dshtein, V., Pchelintsev, V., Ukhlov, A., \emph{On conformal spectral gap estimates of the Dirichlet-Laplacian}, Algebra i Analiz {\bf 31}(2) (2019), 189--203.

\bibitem{GPU19}
Gol'dshtein, V., Pchelintsev, V., Ukhlov, A., \emph{Spectral properties of the Neumann-Laplace operator in quasiconformal regular domains}, { Differential Equations, Mathematical Physics, and Applications. Selim Grigorievich Krein Centennial, Contemporary Mathematics, AMS}, {\bf 734} (2019), 129--144.

\bibitem{GPU2019}
Gol'dshtein, V., Pchelintsev, V., Ukhlov, A., \emph{Spectral stability estimates of Dirichlet divergence form elliptic operators}, Arxiv:1905.05473 (2019).

\bibitem{GPU2020}
Gol'dshtein, V., Pchelintsev, V., Ukhlov, A., \emph{Quasiconformal mappings and Neumann eigenvalues of
divergent elliptic operators}, ArXiv:1903.11301 (2020).

\bibitem{GU}
Gol'dshtein, V., Ukhlov, A., \emph{Weighted Sobolev spaces and embedding theorems},
Trans. Amer. Math. Soc. {\bf 361} (2009), 3829--3850.

\bibitem{GN13}
Grebenkov, D. S., Nguyen, B.-T., \emph{Geometrical Structure of Laplacian Eigenfunctions}, SIAM Review {\bf 55}(4) (2013), 601--667.

\bibitem{GNR18}
Gutlyanski\u{\i}, V., Nesmelova, O., Ryazanov, V., \emph{On quasiconformal maps and semi-linear equations in the plane},
J. Math. Sci. {\bf 229}(1) (2018), 7--29.

\bibitem{H78}
Hayman, W. K., \emph{Some bounds for principal frequency}, Appl. Anal. {\bf 7} (1978), 247--254.

\bibitem{Henr}
Henrot~A. \emph{Extremum Problems for Eigenvalues of Elliptic Operators, Frontiers
in Mathematics}, Birkh\"auser, 2006.

\bibitem{H1}
Hurri, R., \emph{Poincar\'e domains in $\mathbb R^n$}, Ann. Acad. Sci. Fenn., Ser. A, I. Math., Dissertationes, {\bf 71} (1988), 1--42.

\bibitem{HSMV}
Hurri-Syrj\"anen, R., Marola, N., V\"ah\"akangas, A. V., \emph{Poincar\'e inequalities in quasihyperbolic boundary condition domains}, Manuscripta Math. {\bf 148} (2015), 99--118.

\bibitem{KOT01}
Koskela, P., Onninen, J., Tyson, J. T.,
\emph{Quasihyperbolic boundary conditions and capacity: H\"older continuity of quasiconformal mappings},
 Comment. Math. Helv. {\bf 76} (2001), 416--435.

\bibitem{KOT02}
Koskela, P., Onninen, J., Tyson, J. T.,
\emph{Quasihyperbolic boundary conditions and capacity: Poincar\'e domains},
 Math. Ann. {\bf 323} (2002), 811--830.

\bibitem{M65}
Makai, E., \emph{A lower estimation of simply connected membranes}, Act. Math. Acad.
Sci. Hungary {\bf 16} (1965), 319--327.

\bibitem{M}
Maz'ya, V., \emph{Sobolev spaces: with applications to elliptic
partial differential equations}, Springer, Berlin/Heidelberg, 2010.

\bibitem{PW}
Payne, L. E., Weinberger, H. F., \emph{Some isoperimetric inequalities for membrane frequencies and torsional rigidity}, J. Math. Anal. Appl. {\bf 2} (1961), 210--216.

\bibitem{U93}
Ukhlov, A., \emph{On mappings, which induce embeddings of
Sobolev spaces}, Siberian Math. J. {\bf 34} (1993), 185--192.

\bibitem{VG75}
Vodop'yanov, S. K., Gol'dstein, V. M. \emph{Lattice isomorphisms of the spaces $W^1_n$ and quasiconformal mappings},
Siberian Math. J. {\bf 16} (1975), 224--246.

\bibitem{VGR}
Vodop'yanov, S. K., Gol'dstein, V. M., Reshetnyak, Yu. G.,
\emph{On geometric properties of functions with generalized first derivatives},
Uspekhi Mat. Nauk {\bf 34} (1979), 17--65.

\bibitem{VU02}
Vodop'yanov, S. K., Ukhlov, A. D., \emph{Superposition operators in Sobolev spaces},
Izvestiya VUZ {\bf 46} (2002), 11--33.



\end{thebibliography}
\end{document}